\newtheorem{theorem}{Theorem}[section]
\newtheorem{corollary}{Corollary}[section]
\newtheorem{lemma}{Lemma}[section]
\newtheorem{definition}{Definition}[section]
\newtheorem{proposition}{Proposition}[section]
\newtheorem{remark}{Remark}[section]
\newtheorem{notation}{Notation}[section]
\begin{document}
\title{The $A_{\alpha}$-spectra of graph operations based on generalized (edge) corona \footnote{This work is supported by the Natural Science Foundation of Xinjiang Province (No. 2021D01C069) and the National Natural Science Foundation of China (No. 12161085).}}
\author{Xiaxia Zhang, Xiaoling Ma\footnote {Corresponding author. Email addresses: mxling2018@163.com}
\\ {\small College of Mathematics and System Sciences Xinjiang University,}\\
{\small Urumqi Xinjiang 830017, P.R.China}\\}
\date{}
\maketitle
\begin{center}
\begin{minipage}{5.5 in}
{\small
{\bf Abstract:} Let $G, H_{i}$ be simple graphs with $n=|V(G)|$, $m=|E(G)|$ and $i=1, 2, \ldots, n(m)$. The generalized corona, denoted $G\tilde{o}\wedge^{n}_{i=1} H_{i}$, is the graph obtained by taking one copy of graphs $G, H_{1},\ldots, H_{n}$ and joining the $i$th vertex of $G$ to every vertex of $H_{i}$ for $1 \leq i \leq n$. The generalized edge corona, denoted by $G[H_i]_1^m$, is the graph obtained by taking one copy of graphs $G, H_{1},\ldots, H_{m}$ and then joining two end-vertices of the $i$th edge of $G$ to every vertex of $H_{i}$ for $1 \leq i \leq m$. For any real $\alpha\in[0,1]$, the matrix $A_{\alpha}(G)=\alpha D(G)+(1-\alpha)A(G)$, where $A(G)$ and $D(G)$ are the adjacency matrix and the degree matrix of a graph $G$, respectively. In this paper, we obtain the $A_{\alpha}$-characteristic polynomial of $G\tilde{o}\wedge^{n}_{i=1} H_{i}$, which extends some known results. Meanwhile, we determine the $A_{\alpha}$-characteristic polynomial of $G[H_i]_1^m$ and get  the $A_{\alpha}$-spectrum of $G[H_i]_1^m$ when $G$ and $H_i$ are regular graphs for $1\le i\le m$. As an application of the above conclusions, we construct infinitely many pairs of non-regular $A_{\alpha}$-cospectral graphs. \\

\noindent {\bf AMS classification:} 05C50, 68R10 \\[1mm]
{\bf Keywords:} Generalized corona; generalized edge corona; $A_{\alpha}$-characteristic polynomial; $A_{\alpha}$-spectrum; $A_{\alpha}$-cospectral}\\
\end{minipage}
\end{center}
\vskip 1cm

\section{Introduction}

In this paper, all graphs considered are simple and undirected.
Let $G$ be a graph of order $n$ with the vertex set $V(G)=\{v_1, v_2, \ldots, v_{n}\}$ and the edge set $E(G)$. The {\it degree} of a vertex $v_i$ in a graph $G$, denoted by $d_i = d_G(v_i)$, is the number of edges incident with $v_i$. Let $D(G) = diag(d_1, d_2, \ldots, d_n)$ be the diagonal matrix of vertex degrees of $G$. The {\it adjacency matrix} of $G$ is an $n\times n$ matrix $A(G)$ whose $(i,j)$-entry is 1 if $v_{i}$ is adjacent to $v_{j}$ and 0, otherwise. The {\it Laplacian matrix} and the {\it signless Laplacian matrix} of $G$ are defined as $L(G)=D(G)-A(G)$ and $Q(G)=D(G)+A(G)$, respectively. In 2017, Nikiforov \cite{ae} presented to study the convex combinations $A_{\alpha}(G)$ of $A(G)$ and $D(G)$ defined by
\begin{equation}\label{A-alpha-matrix}
A_{\alpha}(G)=\alpha D(G)+(1-\alpha)A(G),\quad for \quad 0 \leq \alpha \leq 1.
\end{equation}

It is worth noting that $A_{0}(G)=A(G)$, $A_{1/2}(G)=\frac{1}{2}Q(G)$, $A_{1}(G)=D(G)$ and $A_p(G)-A_q(G)=(p-q)L(G)$. Thus the matrices $A_{\alpha}(G)$ can underpin a unified theory of $A(G)$ and $Q(G)$, and are closely related to the matrix of $L(G)$.

Let $U$ be an $n\times n$ real symmetric matrix. The {\it characteristic polynomial} of the matrix $U$, denoted by $f_{U}(\lambda)=|\lambda I_n-U|$, where $I_{n}$ is the identity matrix of order $n$. The roots of $f_{U}(\lambda)$ are called the {\it $U$-eigenvalues}. The collection of eigenvalues of $U$ together with multiplicities is called the $U$-{\it spectrum},
denoted by $Spec(U)$. Two non-isomorphic graphs are said to be $U$-{\it cospectral} if they have the same $U$-spectrum.

For a graph $G$, according to (\ref{A-alpha-matrix}),
it is observed that the study of $A_{\alpha}$-spectrum not only unifies the related results of $A$-spectrum and $Q$-spectrum but also obtains more comprehensive results and finds more general rules. In addition, unlike the classical graph matrices, the spectral properties of $A_{\alpha}$-matrix vary depending on the variable $\alpha$. Since $A_{\alpha}$-matrix has many properties and characteristics that the classical graph matrices do not have, $A_{\alpha}$-matrix bring many problems worth studying, which enrich and develop the theory of graph spectra. For some related results of $A_{\alpha}$-matrix, we refer the readers to \cite{ae, ai, ao, ap, as, ad, af,ag}.

The information of various spectra in the graph not only reveals the structural properties of the graph, but also has a wide range of applications in computer science and other fields \cite{bs,bf,bt,bn}. Specifically, there are many problems in sensor networks where the tools from the combinatorial optimization and spectral graph theory can help, say in solving partitioning, assignment, routing and scheduling problems \cite{cs,ct,cp}.

Determining the spectra of many graph operations is a basic and very meaningful
work in spectral graph theory. In order to construct a graph whose automorphism group is the wreath product of the automorphism group of their components, Frucht and Harary \cite{br} first introduced the corona of two graphs $G$ and $H$.
Since then a number of results on graph-theoretic properties of corona have appeared. As far as eigenvalues are concerned, the $A$-characteristic polynomial and $Q$-characteristic polynomial of the corona, edge corona, neighborhood corona subdivision-vertex and subdivision-edge neighborhood corona of any two graphs can be expressed by that of two factor graphs \cite{cvet, cd, DH, cx, cw, cv, cve}.

Motivated by the above works, Fiuj Laali, Haj Seyyed Javadi and Dariush Kiani \cite{ba} presented the generalized corona of graphs in Definition \ref{GCG}. Meanwhile, they determined and investigated the characteristic, Laplacian and signless Laplacian polynomial of $G\tilde{o}\wedge^{n}_{i=1} H_{i}$, respectively.

\begin{definition}\label{GCG}
Given simple graphs $G$, $H_1$, $H_2$, \ldots, $H_n$, where $n=|V(G)|$, the
{\it generalized corona}, denoted $G\tilde{o}\wedge^{n}_{i=1} H_{i}$, is the graph obtained by taking one copy of graphs $G$, $H_1$, $H_2$, \ldots, $H_n$ and joining the $i$th vertex of $G$ to every vertex of $H_i$ for $1\le i\le n$.
\end{definition}

For example, let $G$, $H_1$, $H_2$ and $H_3$ be the graphs illustrated in
Fig. \ref{graph1}(a), then the graph $G\tilde{o}\wedge^{n}_{i=1} H_{i}$ can be illustrated in Fig. \ref{graph1}(b).

\begin{figure}[h]
\centering
  \footnotesize
\unitlength 1.75mm 
\linethickness{0.4pt}
\ifx\plotpoint\undefined\newsavebox{\plotpoint}\fi 
\begin{picture}(81.483,28.651)(0,0)
\put(3.525,26.903){\line(0,-1){19.845}}
\put(3.525,27.053){\circle*{2.065}}
\put(3.599,18.058){\circle*{2.07}}
\put(3.45,8.025){\circle*{1.994}}
\put(20.545,25.938){\makebox(0,0)[cc]{$H_{1}$}}
\put(20.545,17.985){\makebox(0,0)[cc]{$H_{2}$}}
\put(20.545,8.1){\makebox(0,0)[cc]{$H_{3}$}}
\put(26.417,26.457){\circle*{1.904}}
\put(34.518,26.384){\circle*{1.892}}
\put(26.491,18.282){\line(1,0){17.318}}
\put(26.453,26.442){\line(1,0){8.063}}
\put(34.266,11.879){\line(-2,-1){8}}
\multiput(42.141,7.817)(-.065655462,.033613445){119}{\line(-1,0){.065655462}}
\put(34.141,11.817){\circle*{1.904}}
\put(42.141,7.879){\circle*{1.875}}
\put(26.266,7.942){\circle*{1.875}}
\put(26.266,7.942){\line(1,0){15.563}}
\put(26.453,18.254){\circle*{1.846}}
\put(43.703,18.192){\circle*{1.803}}
\put(35.078,18.317){\circle*{1.79}}
\put(.074,26.832){\makebox(0,0)[cc]{$v_{1}$}}
\put(.149,17.913){\makebox(0,0)[cc]{$v_{2}$}}
\put(0,7.953){\makebox(0,0)[cc]{$v_{3}$}}
\put(3.345,4.98){\makebox(0,0)[cc]{$G$}}
\put(61.398,26.876){\line(0,-1){19.845}}
\put(61.398,27.024){\circle*{2.065}}
\put(61.472,18.031){\circle*{2.07}}
\put(61.323,7.999){\circle*{1.994}}
\put(75.983,28.024){\line(0,-1){3.875}}
\put(75.858,22.024){\line(0,-1){5.938}}
\multiput(75.858,13.96)(-.03333333,-.08226667){60}{\line(0,-1){.08226667}}
\multiput(73.858,9.024)(.03333333,-.06668333){60}{\line(0,-1){.06668333}}
\put(75.858,5.023){\line(0,1){8.875}}
\put(75.983,27.899){\circle*{1.505}}
\put(75.92,24.274){\circle*{1.425}}
\put(75.795,22.086){\circle*{1.425}}
\put(75.858,16.211){\circle*{1.51}}
\put(75.92,13.837){\circle*{1.398}}
\put(73.92,9.085){\circle*{1.51}}
\put(75.795,5.023){\circle*{1.425}}
\multiput(61.42,27.086)(.5576923,.0336538){26}{\line(1,0){.5576923}}
\multiput(61.545,27.086)(.1722561,-.03353659){82}{\line(1,0){.1722561}}
\multiput(61.545,18.085)(.1255625,.033491071){112}{\line(1,0){.1255625}}
\multiput(61.483,18.085)(.26735185,-.03351852){54}{\line(1,0){.26735185}}
\put(75.858,19.399){\circle*{1.505}}
\multiput(61.42,18.211)(.5111786,.0335){28}{\line(1,0){.5111786}}
\multiput(61.233,8.086)(.085526316,.033631579){171}{\line(1,0){.085526316}}
\multiput(61.42,7.961)(.36582353,.03305882){34}{\line(1,0){.36582353}}
\multiput(61.295,8.023)(.160032967,-.033648352){91}{\line(1,0){.160032967}}
\put(58.42,26.961){\makebox(0,0)[cc]{$v_{1}$}}
\put(58.483,18.024){\makebox(0,0)[cc]{$v_{2}$}}
\put(58.42,8.023){\makebox(0,0)[cc]{$v_{3}$}}
\put(81.42,26.211){\makebox(0,0)[cc]{$H_{1}$}}
\put(81.483,19.399){\makebox(0,0)[cc]{$H_{2}$}}
\put(81.358,9.335){\makebox(0,0)[cc]{$H_{3}$}}
\put(61.393,5.055){\makebox(0,0)[cc]{$G$}}
\put(18.433,0){\makebox(0,0)[cc]{$(a)$}}
\put(68.379,0){\makebox(0,0)[cc]{$(b)$}}
\end{picture}
~\\
~\\
\caption{\footnotesize (a) A connected graph $G$ with vertex set $\{v_1, v_2, v_3\}$ and three graphs $H_1$, $H_2$ and $H_3$. (b) The generalized corona $G\tilde{o}\wedge^{3}_{i=1} H_{i}$.}
\label{graph1}
\end{figure}
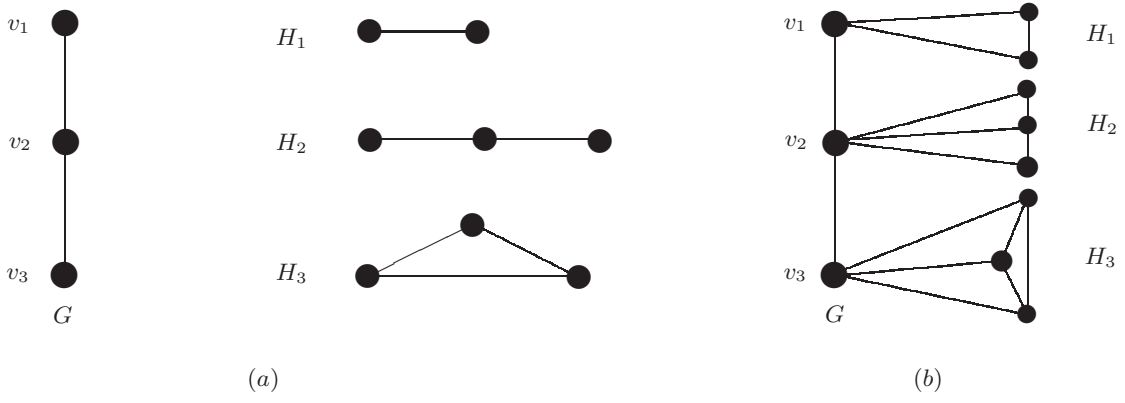

In 2018, Luo and Yan \cite{dl} extended the generalized corona to generalized edge corona and gave the definition of generalized edge corona graphs in Definion \ref{GEC}. In addition, they determined and studied the $A$-characteristic
polynomial, $L$-characteristic polynomial and $Q$-characteristic polynomial of the generalized edge corona graphs, respectively.
\begin{definition}\label{GEC}
Let $G$ be a simple graph with $m$ edges and $H_{1}, H_{2}, \ldots, H_{m}$ be $m$ simple graphs. The generalized edge corona, denoted by $G[H_i]_1^m$, is the graph obtained by taking one copy of graphs $G, H_{1}, H_{2}, \ldots, H_{m}$ and then joining two end-vertices of the $i$th edge of $G$ to every vertex of $H_{i}$ for $1 \leq i \leq m$.
\end{definition}

\begin{figure}[h]
\centering
  \footnotesize
\unitlength 1.75mm 
\linethickness{0.4pt}
\ifx\plotpoint\undefined\newsavebox{\plotpoint}\fi 
\begin{picture}(81.312,27.725)(0,0)
\put(3.079,26.543){\line(0,-1){19.845}}
\put(3.079,26.692){\circle*{2.065}}
\put(3.153,17.698){\circle*{2.07}}
\put(3.004,7.664){\circle*{1.994}}
\put(20.099,25.577){\makebox(0,0)[cc]{$H_{1}$}}
\put(25.971,26.097){\circle*{1.904}}
\put(34.072,26.023){\circle*{1.892}}
\put(26.007,26.081){\line(1,0){8.063}}
\put(20.068,12.437){\makebox(0,0)[cc]{$H_{2}$}}
\put(26.014,12.735){\line(1,0){17.318}}
\put(25.976,12.708){\circle*{1.846}}
\put(43.226,12.644){\circle*{1.803}}
\put(34.601,12.769){\circle*{1.79}}
\put(61.127,26.543){\line(0,-1){19.845}}
\put(61.127,26.692){\circle*{2.065}}
\put(61.201,17.698){\circle*{2.07}}
\put(61.052,7.664){\circle*{1.994}}
\put(58.048,22.149){\makebox(0,0)[cc]{$e_{1}$}}
\put(57.974,12.858){\makebox(0,0)[cc]{$e_{2}$}}
\put(75.961,25.939){\line(0,-1){4.237}}
\put(76.035,17.838){\line(0,-1){8.993}}
\put(75.886,25.939){\circle*{1.863}}
\put(75.961,21.777){\circle*{1.79}}
\put(75.961,17.763){\circle*{1.815}}
\put(75.961,13.155){\circle*{1.904}}
\put(76.035,8.919){\circle*{1.79}}
\multiput(61.17,26.757)(.5048966,-.0333448){29}{\line(1,0){.5048966}}
\multiput(61.021,26.608)(.104263889,-.033548611){144}{\line(1,0){.104263889}}
\multiput(61.244,17.838)(.062042373,.033694915){236}{\line(1,0){.062042373}}
\multiput(61.318,17.763)(.117483871,.033572581){124}{\line(1,0){.117483871}}
\multiput(61.244,17.838)(1.626889,-.033111){9}{\line(1,0){1.626889}}
\multiput(61.244,17.838)(.10375,-.033555556){144}{\line(1,0){.10375}}
\multiput(61.096,17.615)(.0581899225,-.0337054264){258}{\line(1,0){.0581899225}}
\multiput(61.393,7.655)(.050225256,.0337372014){293}{\line(1,0){.050225256}}
\multiput(61.096,7.729)(.096858065,.033567742){155}{\line(1,0){.096858065}}
\multiput(60.873,7.655)(.44594118,.03279412){34}{\line(1,0){.44594118}}
\put(81.312,23.784){\makebox(0,0)[cc]{$H_{1}$}}
\put(81.238,12.783){\makebox(0,0)[cc]{$H_{2}$}}
\put(0,21.777){\makebox(0,0)[cc]{$e_{1}$}}
\put(.074,12.338){\makebox(0,0)[cc]{$e_{2}$}}
\put(2.973,4.905){\makebox(0,0)[cc]{$G$}}
\put(61.17,4.905){\makebox(0,0)[cc]{$G$}}
\put(17.244,0){\makebox(0,0)[cc]{$(a)$}}
\put(68.9,0){\makebox(0,0)[cc]{$(b)$}}
\end{picture}
~\\
~\\
\caption{\footnotesize (a) A connected graph $G$ with the edge set $\{e_1, e_2\}$, $H_1$ and $H_2$. (b) The generalized edge corona $G[H_i]_1^2$.}
\label{graph2}
\end{figure}
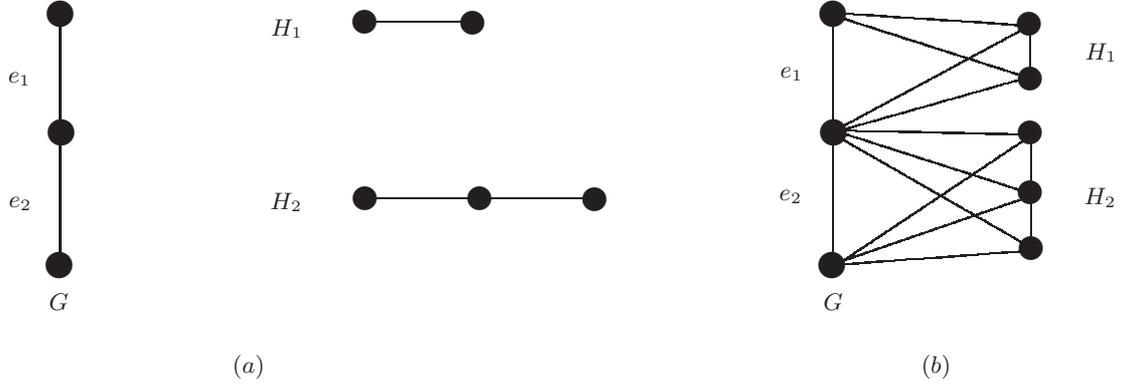

The above graph construction is illustrated using an examples. When $G=P_3$ with the edge set $\{e_1, e_2\}$, $H_1=P_2$ and $H_2=P_3$, the generalized edge corona $G[H_i]_1^2$ is shown as in Fig. \ref{graph2}.

In this paper, we focus on the $A_{\alpha}$-characteristic polynomial of the
generalized corona graphs and generalized edge corona graphs, respectively. In Section 2, we give some preliminaries. In Section 3, for any real $\alpha\in[0,1]$, we compute the characteristic polynomial of $A_{\alpha}(G\tilde{o}\wedge^{n}_{i=1} H_{i})$ and construct infinitely many pairs of non-regular $A_{\alpha}$-cospectral graphs. In Section 4, we obtain the $A_{\alpha}$-characteristic polynomial of $G[H_i]_1^m$ and get  the $A_{\alpha}$-spectrum of $G[H_i]_1^m$ when $G$ and $H_i$ are regular graphs for $1\le i\le m$. As an application, we also consider the construction of non-regular $A_{\alpha}$-cospectral graphs.

\section{Preliminaries}
In order to prove our main results, we state several known conclusions, all of which are used in the following Sections. Throughout the paper for any integers $k$, $n_1$ and $n_2$, $I_k$ denotes the identity matrix of size $k$, $\mathbf{1}_k$ denotes the column vector of size $k$ whose all entries are $1$ and $J_{n_1\times n_2}$ denotes the $n_1\times n_2$ matrix whose all entries are $1$.

\begin{lemma}\label{Schur}[Schur complement\cite{da}] Let $A$ be an $n \times n$ matrix partitioned as
$$
\left(\begin{array}{ll}
A_{11} & A_{12} \\
A_{21} & A_{22}
\end{array}\right),
$$
where $A_{11}$ and $A_{22}$ are square matrices. If $A_{11}$ and $A_{22}$ are invertible, then
$$
\begin{aligned}
\operatorname{det}\left(\begin{array}{ll}
A_{11} & A_{12} \\
A_{21} & A_{22}
\end{array}\right) & =\operatorname{det}\left(A_{22}\right) \operatorname{det}\left(A_{11}-A_{12} A_{22}{ }^{-1} A_{21}\right) \\
& =\operatorname{det}\left(A_{11}\right) \operatorname{det}\left(A_{22}-A_{21} A_{11}{ }^{-1} A_{12}\right).
\end{aligned}
$$
\end{lemma}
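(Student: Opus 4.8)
The plan is to prove both determinant identities by exhibiting explicit block triangular factorizations of the partitioned matrix and then invoking the multiplicativity of the determinant together with the standard fact that the determinant of a block triangular matrix equals the product of the determinants of its diagonal blocks.

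First I would establish the second equality, which uses the invertibility of $A_{11}$. The key observation is the block factorization
\[
\begin{pmatrix} A_{11} & A_{12} \\ A_{21} & A_{22} \end{pmatrix}
=
\begin{pmatrix} I & 0 \\ A_{21}A_{11}^{-1} & I \end{pmatrix}
\begin{pmatrix} A_{11} & A_{12} \\ 0 & A_{22}-A_{21}A_{11}^{-1}A_{12} \end{pmatrix},
\]
which can be verified by carrying out the block multiplication. Taking determinants of both sides, the first factor is block lower triangular with identity diagonal blocks and hence has determinant $1$, while the second is block upper triangular with diagonal blocks $A_{11}$ and $A_{22}-A_{21}A_{11}^{-1}A_{12}$, so its determinant is $\det(A_{11})\det(A_{22}-A_{21}A_{11}^{-1}A_{12})$; this yields the claimed formula.

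For the first equality I would argue symmetrically, now using the invertibility of $A_{22}$, via the factorization
\[
\begin{pmatrix} A_{11} & A_{12} \\ A_{21} & A_{22} \end{pmatrix}
=
\begin{pmatrix} I & A_{12}A_{22}^{-1} \\ 0 & I \end{pmatrix}
\begin{pmatrix} A_{11}-A_{12}A_{22}^{-1}A_{21} & 0 \\ A_{21} & A_{22} \end{pmatrix},
\]
and once more taking determinants, where the upper triangular factor contributes $1$ and the block lower triangular factor contributes $\det(A_{22})\det(A_{11}-A_{12}A_{22}^{-1}A_{21})$.

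The only ingredient that requires any care is the determinant-of-a-block-triangular-matrix fact; I would either cite it as standard or justify it in one line by Laplace expansion along the rows belonging to the zero off-diagonal block, which reduces the determinant to the product of the two diagonal-block determinants. Since everything here is a formal computation resting on the two invertibility hypotheses, I do not expect a genuine obstacle: the main point to check is simply that the displayed products multiply out to the original matrix.
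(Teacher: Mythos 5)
Your proof is correct: both block factorizations multiply out to the original matrix exactly as claimed, and the determinant-of-a-block-triangular-matrix fact you invoke is standard and holds with no invertibility assumptions. The paper itself states this lemma without proof, citing Bapat's \emph{Graphs and Matrices}, and your argument is precisely the standard textbook derivation found there (block $LU$/$UL$ factorization plus multiplicativity of the determinant), so there is nothing to reconcile. One small observation worth keeping: as your write-up already makes implicit, each of the two identities needs only one of the two invertibility hypotheses ($A_{11}$ invertible for the second, $A_{22}$ for the first), so your proof in fact establishes a slightly sharper statement than the one quoted.
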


\begin{proposition}\cite{ba}\label{Wmatrix}
Let $G$ be a bipartite graph of order $n$ with $V(G)=(P, Q)$ such that $|P|=p$ and $|Q|=n-p$. Also assume that $W$ is a matrix of order $p \times(n-p)$ such that
$$
A(G)=\left(\begin{array}{cc}
0 & W \\
W^T & 0
\end{array}\right).
$$
For
$$
f_A(\lambda)=\operatorname{det}\left(\begin{array}{cc}
\lambda I_p & -W \\
-W^T & \lambda I_{n-p}
\end{array}\right),
$$
with two methods of Schur complement we have
$$
\begin{aligned}
f_A(\lambda) & =\lambda^{n-2 p} \operatorname{det}\left(\lambda^2 I_p-W W^T\right)=\lambda^{n-2 p} l(\lambda) \\
& =\lambda^{2 p-n} \operatorname{det}\left(\lambda^2 I_{n-p}-W^T W\right)=\lambda^{2 p-n} h(\lambda).
\end{aligned}
$$
It is clear that if $n=2p$, then $f_A(\lambda)=l(\lambda)=h(\lambda)$.
\end{proposition}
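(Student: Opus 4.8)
The plan is to apply the Schur complement identity of Lemma~\ref{Schur} to the block matrix
$$M(\lambda)=\begin{pmatrix}\lambda I_p & -W\\ -W^T & \lambda I_{n-p}\end{pmatrix}$$
twice: once eliminating the bottom-right block $\lambda I_{n-p}$ and once eliminating the top-left block $\lambda I_p$. Throughout I would first restrict to $\lambda\neq 0$, so that both diagonal blocks are invertible and the hypotheses of Lemma~\ref{Schur} are met; the degenerate value $\lambda=0$ is handled afterwards.

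For the first form I would take $A_{11}=\lambda I_p$, $A_{12}=-W$, $A_{21}=-W^T$, $A_{22}=\lambda I_{n-p}$ and compute the Schur complement $A_{11}-A_{12}A_{22}^{-1}A_{21}=\lambda I_p-\lambda^{-1}WW^T=\lambda^{-1}(\lambda^2 I_p-WW^T)$. Pulling the scalar $\lambda^{-1}$ out of all $p$ rows gives determinant $\lambda^{-p}\det(\lambda^2 I_p-WW^T)$, and multiplying by $\det(A_{22})=\lambda^{n-p}$ yields $f_A(\lambda)=\lambda^{n-2p}l(\lambda)$. The second form is entirely symmetric: taking instead $A_{22}-A_{21}A_{11}^{-1}A_{12}=\lambda I_{n-p}-\lambda^{-1}W^TW$ and multiplying by $\det(A_{11})=\lambda^{p}$ gives $f_A(\lambda)=\lambda^{2p-n}h(\lambda)$. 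Equating the two outputs recovers the full chain of identities claimed in the statement.

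The only genuine subtlety is the excluded value $\lambda=0$, where neither diagonal block is invertible and Lemma~\ref{Schur} does not directly apply. I would dispose of this by a polynomial-identity argument: $f_A(\lambda)$ is a polynomial of degree $n$, and after clearing the negative powers of $\lambda$ each displayed expression is likewise a genuine polynomial (here one uses that the rank deficiency of $WW^T$, respectively $W^TW$, supplies exactly the missing factors of $\lambda$), so the equalities established on the infinite set $\{\lambda\neq 0\}$ force them to hold identically, including at $\lambda=0$; alternatively one may invoke continuity of the determinant. Finally, the stated special case is immediate: when $n=2p$ the prefactors $\lambda^{n-2p}$ and $\lambda^{2p-n}$ are both $1$, so $f_A(\lambda)=l(\lambda)=h(\lambda)$. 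I expect no real difficulty beyond bookkeeping the scalar powers of $\lambda$ correctly and justifying the passage through $\lambda=0$.
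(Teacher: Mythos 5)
Your proposal is correct and follows exactly the route the paper indicates: the proposition is quoted from the literature with its proof embedded in the statement itself (``with two methods of Schur complement''), namely applying Lemma~\ref{Schur} once eliminating each diagonal block, which is precisely your computation. Your additional care at $\lambda=0$ (via the polynomial/rational-function identity argument, noting the rank deficiency of $WW^T$ or $W^TW$ supplies the compensating factors of $\lambda$) is a refinement the paper silently omits, but it does not change the approach.
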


For any real $\alpha\in[0,1]$, let $A_{\alpha}(G)=\alpha D(G)+(1-\alpha)A(G)$, where $A(G)$ and $D(G)$ are the adjacency matrix and the degree matrix of a graph $G$ with $n$ vertices, respectively. Based on the definition of $M$-coronal of a matrix $M$\cite{cw}, Abdus Sahir and Abu Nayeem \cite{dq} introduced the $A_{\alpha}$-{\it coronal} $\chi_{A_{\alpha}(G)}(\lambda)$ of $G$, which is defined to be the sum of all entries of the matrix $\big((\lambda-\alpha) I_n-A_{\alpha}(G)\big)^{-1}$, that is $\chi_{A_{\alpha}(G)}(\lambda)=\mathbf{1}_n^T\big((\lambda-\alpha) I-A_{\alpha}(G)\big)^{-1} \mathbf{1}_n$.

Tahir and Zhang \cite{dc} stated the following $\chi_{A_{\alpha}(G)}(\lambda)$ when $G$ is a regular graph or a complete bipartite graph. Next, we also give a short proof of $\chi_{A_{\alpha}(G)}(\lambda)$ for a regular graph.

\begin{lemma}\label{r-regular}
Let $G$ be an $r$-regular graph of order $n$. For any real $\alpha\in[0,1]$, then
$$\chi_{A_{\alpha}(G)}(\lambda)=\frac{n}{\lambda-r-\alpha}.$$
\end{lemma}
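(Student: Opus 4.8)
The plan is to exploit the fact that, for an $r$-regular graph, the all-ones vector $\mathbf{1}_n$ is a common eigenvector of both $D(G)$ and $A(G)$, and hence of $A_\alpha(G)$. This collapses the otherwise awkward task of summing all entries of an inverse matrix into a single scalar computation.

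First I would record that for an $r$-regular graph we have $D(G)=rI_n$, so that $A_\alpha(G)=\alpha r I_n+(1-\alpha)A(G)$. Since every row sum of $A(G)$ equals $r$, the vector $\mathbf{1}_n$ satisfies $A(G)\mathbf{1}_n=r\mathbf{1}_n$. Combining these two observations gives
\begin{equation*}
A_\alpha(G)\mathbf{1}_n=\alpha r\,\mathbf{1}_n+(1-\alpha)r\,\mathbf{1}_n=r\,\mathbf{1}_n,
\end{equation*}
so $\mathbf{1}_n$ is an eigenvector of $A_\alpha(G)$ associated with the eigenvalue $r$.

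Next I would apply the matrix $(\lambda-\alpha)I_n-A_\alpha(G)$ to $\mathbf{1}_n$, obtaining $\bigl((\lambda-\alpha)I_n-A_\alpha(G)\bigr)\mathbf{1}_n=(\lambda-\alpha-r)\,\mathbf{1}_n$. Provided $\lambda\neq r+\alpha$, the matrix is invertible on this eigenline and we may invert directly to get $\bigl((\lambda-\alpha)I_n-A_\alpha(G)\bigr)^{-1}\mathbf{1}_n=\tfrac{1}{\lambda-r-\alpha}\,\mathbf{1}_n$. The coronal then follows by left-multiplying by $\mathbf{1}_n^T$ and using $\mathbf{1}_n^T\mathbf{1}_n=n$, which yields
\begin{equation*}
\chi_{A_\alpha(G)}(\lambda)=\mathbf{1}_n^T\bigl((\lambda-\alpha)I_n-A_\alpha(G)\bigr)^{-1}\mathbf{1}_n=\frac{n}{\lambda-r-\alpha}.
\end{equation*}

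There is no real obstacle here beyond spotting the eigenvector structure; the only point requiring a word of care is the invertibility assumption $\lambda\neq r+\alpha$, which is harmless since $\chi_{A_\alpha(G)}(\lambda)$ is understood as a rational function of $\lambda$ and the excluded value is precisely the pole of the answer. I would mention this in passing to keep the argument clean.
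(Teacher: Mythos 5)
Your proof is correct and is essentially identical to the paper's own argument: both use $D(G)=rI_n$ and the row-sum property $A(G)\mathbf{1}_n=r\mathbf{1}_n$ to show $\bigl((\lambda-\alpha)I_n-A_\alpha(G)\bigr)\mathbf{1}_n=(\lambda-\alpha-r)\mathbf{1}_n$, then invert and left-multiply by $\mathbf{1}_n^T$ to obtain $\chi_{A_\alpha(G)}(\lambda)=\frac{n}{\lambda-r-\alpha}$. Your explicit remark that $\lambda\neq r+\alpha$ is needed for invertibility, with the coronal understood as a rational function, is a small point of care that the paper leaves implicit.
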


\begin{proof}
By the definition of $A_{\alpha}$-coronal, we have $\chi_{A_{\alpha}(G)}(\lambda)=\mathbf{1}_n^T\big((\lambda-\alpha) I-A_{\alpha}(G)\big)^{-1} \mathbf{1}_n.$ Since $G$ is an $r$-regular graph of order $n$, it is easy to see that $D(G)=rI_n$. Thus
$$\begin{aligned}
\big((\lambda-\alpha)I_n-A_{\alpha}(G)\big) \mathbf{1}_n
&=\big((\lambda-\alpha)I_n-\alpha D(G)-(1-\alpha)A(G)\big) \mathbf{1}_n\\
&=(\lambda-\alpha)\mathbf{1}_n-\alpha r \mathbf{1}_n-(1-\alpha)r \mathbf{1}_n \\
&=\big(\lambda-\alpha-\alpha r-(1-\alpha)r\big) \mathbf{1}_n \\
&=(\lambda-\alpha-r)\mathbf{1}_n,
\end{aligned}
$$
which implies that
$$\begin{aligned}
\frac{1}{\lambda-r-\alpha}\mathbf{1}_n&=\big((\lambda-\alpha)I-\alpha D(G)-(1-\alpha)A(G)\big)^{-1} \mathbf{1}_n\\
\quad \frac{\mathbf{1}_n^T \cdot \mathbf{1}_n}{\lambda-r-\alpha}&=\mathbf{1}_n^T\big((\lambda-\alpha)I-\alpha D(G)-(1-\alpha)A(G)\big)^{-1} \mathbf{1}_n\\
\chi_{A_{\alpha}(G)}(\lambda)&=\frac{n}{\lambda-r-\alpha},
\end{aligned}$$
as required.
\end{proof}

\begin{lemma}\cite{dc}\label{chi}
For the complete bipartite graph $K_{p,q}$ with two parts of order $p, q$, respectively, the $A_{\alpha}$-coronal of $K_{p.q}$ is given by
$$\chi_{A_{\alpha}(K_{p,q})}(\lambda)=\frac{(p+q)\lambda-\alpha (p+q)^{2}+2pq}{\lambda^{2}
-\alpha (p+q)\lambda+(2\alpha-1)pq}.$$
\end{lemma}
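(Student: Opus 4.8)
The plan is to compute $\chi_{A_{\alpha}(K_{p,q})}(\lambda)=\mathbf{1}_{p+q}^{T}\big((\lambda-\alpha)I_{p+q}-A_{\alpha}(K_{p,q})\big)^{-1}\mathbf{1}_{p+q}$ directly from the block structure of $K_{p,q}$, without ever forming the full inverse. First I would partition the vertex set into the two parts of sizes $p$ and $q$. Since every vertex of the first part has degree $q$ and every vertex of the second has degree $p$, the matrix $A_{\alpha}(K_{p,q})$ splits as a $2\times 2$ block matrix with diagonal blocks $\alpha q I_{p}$ and $\alpha p I_{q}$ and off-diagonal blocks $(1-\alpha)J_{p\times q}$ and $(1-\alpha)J_{q\times p}$. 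Consequently $(\lambda-\alpha)I_{p+q}-A_{\alpha}(K_{p,q})$ has the form $\left(\begin{smallmatrix} aI_{p} & -cJ_{p\times q}\\ -cJ_{q\times p} & bI_{q}\end{smallmatrix}\right)$ with $a=\lambda-\alpha-\alpha q$, $b=\lambda-\alpha-\alpha p$ and $c=1-\alpha$.

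The key idea is to replace the inversion by solving the linear system $M\mathbf{x}=\mathbf{1}_{p+q}$, where $M$ is the block matrix above, and then reading off $\chi=\mathbf{1}_{p+q}^{T}\mathbf{x}$. Because the two vertex classes are internally interchangeable and the right-hand side $\mathbf{1}_{p+q}$ is constant on each class, the solution must also be constant on each class, so I would use the ansatz $\mathbf{x}=(s\mathbf{1}_{p}^{T},\,t\mathbf{1}_{q}^{T})^{T}$. Using $J_{p\times q}\mathbf{1}_{q}=q\mathbf{1}_{p}$ and $J_{q\times p}\mathbf{1}_{p}=p\mathbf{1}_{q}$, the block system collapses to the scalar $2\times 2$ system $as-cqt=1$ and $-cps+bt=1$. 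This reduction is the heart of the argument: it converts an $(p+q)\times(p+q)$ inverse into a $2\times 2$ computation. (Alternatively one could invert the $2\times2$ block matrix via the Schur complement of Lemma \ref{Schur}, but the linear system is the most economical route to the coronal.)

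Solving by Cramer's rule gives $s=(b+cq)/\Delta$ and $t=(a+cp)/\Delta$ with $\Delta=ab-c^{2}pq$, whence $\chi=ps+qt=(pb+qa+2cpq)/\Delta$. It then remains to substitute $a,b,c$ and simplify. Writing $\mu=\lambda-\alpha$, the numerator becomes $(p+q)\mu-\alpha(p+q)^{2}+2pq$ and, using the identity $\alpha^{2}-(1-\alpha)^{2}=2\alpha-1$ to collapse the $pq$-terms, the denominator becomes $\mu^{2}-\alpha(p+q)\mu+(2\alpha-1)pq$, which is exactly the asserted rational function. I expect the only delicate points to be purely bookkeeping: keeping track of the shift by $\alpha$ in the resolvent variable (so that the displayed formula is read in $\mu=\lambda-\alpha$) and correctly combining the $\alpha^{2}pq$ and $(1-\alpha)^{2}pq$ contributions in $\Delta$. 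As a sanity check I would specialize to $p=q$, where $K_{p,p}$ is $p$-regular; both numerator and denominator then acquire the common factor $\mu-(2\alpha-1)p$, and after cancellation $\chi$ reduces to $2p/(\lambda-\alpha-p)$, in agreement with Lemma \ref{r-regular}.
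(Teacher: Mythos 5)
Your derivation is sound, and there is nothing in the paper to compare it against: Lemma \ref{chi} is stated without proof, quoted from \cite{dc}. The block ansatz $\mathbf{x}=(s\mathbf{1}_{p}^{T},t\mathbf{1}_{q}^{T})^{T}$, the collapse to the $2\times 2$ system $as-cqt=1$, $-cps+bt=1$, Cramer's rule, and the simplifications $pb+qa+2cpq=(p+q)\mu-\alpha(p+q)^{2}+2pq$ and $\Delta=\mu^{2}-\alpha(p+q)\mu+(2\alpha-1)pq$ with $\mu=\lambda-\alpha$ all check out; this is exactly the computation one would otherwise extract from the Schur complement of Lemma \ref{Schur}.

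However, the shift you mention parenthetically is not harmless bookkeeping, and you should promote it from an aside to the main conclusion: it shows that the lemma as printed is incompatible with the paper's own definition of the $A_{\alpha}$-coronal. Under the definition used throughout this paper (following \cite{dq}), namely $\chi_{A_{\alpha}(G)}(\lambda)=\mathbf{1}^{T}\big((\lambda-\alpha)I-A_{\alpha}(G)\big)^{-1}\mathbf{1}$, what your computation actually proves is
\[
\chi_{A_{\alpha}(K_{p,q})}(\lambda)
=\frac{(p+q)(\lambda-\alpha)-\alpha (p+q)^{2}+2pq}{(\lambda-\alpha)^{2}-\alpha (p+q)(\lambda-\alpha)+(2\alpha-1)pq},
\]
which is the printed right-hand side evaluated at $\lambda-\alpha$, not at $\lambda$. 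The printed formula is instead the coronal in the unshifted convention $\mathbf{1}^{T}\big(\lambda I-A_{\alpha}(G)\big)^{-1}\mathbf{1}$, evidently the convention of \cite{dc}. Your own sanity check settles which version is correct here: at $p=q$ the printed formula reduces to $2p/(\lambda-p)$, contradicting Lemma \ref{r-regular}, which for the $p$-regular graph $K_{p,p}$ on $2p$ vertices gives $2p/(\lambda-p-\alpha)$, whereas your $\mu$-version reduces to $2p/(\lambda-\alpha-p)$ as it must. So your proof is correct, but what it establishes is a corrected statement; the lemma as displayed cannot be proved under this paper's definition, and the mismatch propagates to where the lemma is used (e.g.\ Corollary \ref{MS bipartite}, which substitutes this formula into expressions derived with the shifted coronal).
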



\section{$A_{\alpha}$-characteristic polynomial of $G\tilde{o}\wedge^{n}_{i=1} H_{i}$}
In this section, we will first concentrate on the $A_{\alpha}$-characteristic polynomial of the generalized corona graph and then consider some applications of related results.


\begin{notation}
Let $G$ be a graph of order $n$. For graphs $H_1, H_2, \ldots, H_n$ of orders $t_1, t_2, \ldots, t_n$, respectively, we define the $n$-variable polynomial
$$\begin{aligned}
& g_{A_{\alpha}}\big(\chi_{A_{\alpha}(H_1)}(\lambda), \chi_{A_{\alpha}(H_2)}(\lambda), \ldots, \chi_{A_{\alpha}(H_n)}(\lambda); G\big) \\
& =\operatorname{det}\left(\left[\begin{array}{ccc}
\lambda-\alpha t_1-(1-\alpha)^{2}\chi_{A_{\alpha}(H_1)}(\lambda) & 0 & 0 \\
0 & \ddots & 0 \\
0 & 0 & \lambda-\alpha t_n-(1-\alpha)^{2}\chi_{A_{\alpha}(H_n)}(\lambda)
\end{array}\right]-A_{\alpha}(G)\right) . \\
\end{aligned}$$
\end{notation}

\begin{proposition}\label{same-order}
Let $G$ be a graph of order $n$ and $H_1, H_2, \ldots, H_n$ be $n$ graphs with same order $q$. If
$\chi_{A_{\alpha}\left(H_1\right)}(\lambda)=\chi_{A_{\alpha}\left(H_2\right)}(\lambda)=\cdots=\chi_{A_{\alpha}\left(H_n\right)}(\lambda)
=\chi(\lambda)$,
then
$$
\begin{gathered}
g_{A_{\alpha}}\big(\chi_{A_{\alpha}(H_1)}(\lambda), \chi_{A_{\alpha}(H_2)}(\lambda), \ldots, \chi_{A_{\alpha}(H_n)}(\lambda); G\big)
=f_{A_{\alpha}(G)}\big(\lambda-\alpha q-(1-\alpha)^{2}\chi(\lambda)\big).
\end{gathered}
$$
\end{proposition}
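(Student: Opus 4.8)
The plan is to reduce everything to the definition of the characteristic polynomial by exploiting the fact that the two hypotheses—equal orders and equal $A_{\alpha}$-coronals—force the diagonal matrix appearing in the definition of $g_{A_{\alpha}}$ to collapse into a scalar multiple of the identity. I would begin by unwinding the \textbf{Notation}: the quantity $g_{A_{\alpha}}\big(\chi_{A_{\alpha}(H_1)}(\lambda),\ldots,\chi_{A_{\alpha}(H_n)}(\lambda);G\big)$ is the determinant of a matrix whose $i$th diagonal entry is $\lambda-\alpha t_i-(1-\alpha)^{2}\chi_{A_{\alpha}(H_i)}(\lambda)$, minus $A_{\alpha}(G)$.

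First I would substitute the hypotheses. Since $H_1,\ldots,H_n$ all have order $q$, we have $t_1=t_2=\cdots=t_n=q$; and since $\chi_{A_{\alpha}(H_1)}(\lambda)=\cdots=\chi_{A_{\alpha}(H_n)}(\lambda)=\chi(\lambda)$, every diagonal entry equals the one common scalar
$$
\mu:=\lambda-\alpha q-(1-\alpha)^{2}\chi(\lambda).
$$
Consequently the diagonal matrix inside the determinant is exactly $\mu\, I_n$, and therefore
$$
g_{A_{\alpha}}\big(\chi_{A_{\alpha}(H_1)}(\lambda),\ldots,\chi_{A_{\alpha}(H_n)}(\lambda);G\big)
=\operatorname{det}\big(\mu\, I_n-A_{\alpha}(G)\big).
$$

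Finally I would invoke the definition of the characteristic polynomial stated in the Introduction, namely $f_U(\lambda)=|\lambda I_n-U|$ for a real symmetric matrix $U$. Taking $U=A_{\alpha}(G)$ (which is symmetric, being a real combination of the symmetric matrices $D(G)$ and $A(G)$) and evaluating at the argument $\mu$ gives $\operatorname{det}\big(\mu\, I_n-A_{\alpha}(G)\big)=f_{A_{\alpha}(G)}(\mu)$. Re-expanding $\mu=\lambda-\alpha q-(1-\alpha)^{2}\chi(\lambda)$ then yields the claimed identity
$$
g_{A_{\alpha}}\big(\chi_{A_{\alpha}(H_1)}(\lambda),\ldots,\chi_{A_{\alpha}(H_n)}(\lambda);G\big)
=f_{A_{\alpha}(G)}\big(\lambda-\alpha q-(1-\alpha)^{2}\chi(\lambda)\big).
$$

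I do not anticipate any genuine obstacle here: the result is essentially a formal substitution, and its whole content lies in the single observation that identical orders together with identical coronals turn the diagonal block into $\mu\, I_n$. The only point requiring mild care is keeping the bookkeeping straight—confirming that the collapse uses \emph{both} hypotheses ($t_i=q$ for the $\alpha t_i$ terms and the coronal equality for the $(1-\alpha)^2\chi_{A_{\alpha}(H_i)}$ terms)—after which the passage to $f_{A_{\alpha}(G)}$ is immediate from the definition of the characteristic polynomial.
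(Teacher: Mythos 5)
Your proof is correct and is precisely the immediate substitution argument the paper has in mind: the paper states this proposition without proof, treating it as a direct consequence of Notation 3.1, and your observation that $t_i=q$ together with the equal coronals collapses the diagonal matrix to $\bigl(\lambda-\alpha q-(1-\alpha)^2\chi(\lambda)\bigr)I_n$ is exactly that intended reasoning. Nothing is missing; the appeal to $f_{A_\alpha(G)}(\mu)=\det\bigl(\mu I_n-A_\alpha(G)\bigr)$ completes it.
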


\begin{theorem}\label{main}
Let $G$ be a graph of order $n$ with the vertex set $V(G)=\{v_1, v_2, \ldots, v_n\}$ and $H_1, H_2, \ldots, H_n$ be $n$ graphs (not necessarily non-isomorphic) of orders $t_1, t_2, \ldots, t_n$, respectively. Then, for any real $\alpha\in[0,1]$, the $A_{\alpha}$-characteristic polynomial of $G\tilde{o}\wedge^{n}_{i=1} H_{i}$ as follows:
$$
f_{A_{\alpha}(G\tilde{o}\wedge^{n}_{i=1} H_{i})}(\lambda)=\prod_{i=1}^n f_{A_{\alpha}(H_i)}(\lambda-\alpha)\cdot
g_{A_{\alpha}}(\chi_{A_{\alpha}(H_1)}(\lambda), \chi_{A_{\alpha}(H_2)}(\lambda), \ldots, \chi_{A_{\alpha}(H_n)}(\lambda); G\big).
$$
\end{theorem}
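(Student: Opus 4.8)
The plan is to write $A_{\alpha}(G\tilde{o}\wedge^{n}_{i=1} H_{i})$ explicitly as a $2\times 2$ block matrix and then apply the Schur complement (Lemma~\ref{Schur}). First I would order the vertices so that the $n$ vertices of $G$ come first, followed by the $t_1$ vertices of $H_1$, the $t_2$ vertices of $H_2$, and so on. In the generalized corona each vertex $v_i$ of $G$ acquires $t_i$ new neighbours, so its degree increases by $t_i$, while each vertex of $H_i$ acquires exactly one new neighbour (namely $v_i$), so its degree increases by $1$. Recording these degree shifts on the diagonal through $\alpha D$, and the new edges through $(1-\alpha)A$ via a matrix $B$ whose $i$-th row is $\mathbf{1}_{t_i}^T$ placed in the block of $H_i$, one obtains
$$A_{\alpha}(G\tilde{o}\wedge^{n}_{i=1} H_{i}) = \begin{pmatrix} A_{\alpha}(G) + \alpha T & (1-\alpha)B \\ (1-\alpha)B^T & \widetilde{A} \end{pmatrix},$$
where $T = \operatorname{diag}(t_1,\ldots,t_n)$ and $\widetilde{A} = \operatorname{diag}\big(A_{\alpha}(H_1)+\alpha I_{t_1}, \ldots, A_{\alpha}(H_n)+\alpha I_{t_n}\big)$ is block-diagonal.

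Next I would set $A_{22} = \lambda I - \widetilde{A}$, whose $i$-th diagonal block is $(\lambda-\alpha)I_{t_i} - A_{\alpha}(H_i)$, so that immediately $\det(A_{22}) = \prod_{i=1}^n f_{A_{\alpha}(H_i)}(\lambda-\alpha)$; this supplies the first factor in the claim. Applying the formula $\det(\lambda I - A_{\alpha}) = \det(A_{22})\,\det\!\big(A_{11} - A_{12}A_{22}^{-1}A_{21}\big)$ of Lemma~\ref{Schur} with $A_{11} = \lambda I_n - A_{\alpha}(G) - \alpha T$ and $A_{12} = A_{21}^T = -(1-\alpha)B$, the remaining work is to evaluate $A_{12}A_{22}^{-1}A_{21} = (1-\alpha)^2\, B A_{22}^{-1} B^T$.

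The key computation, and the only place where something needs care, is that $B A_{22}^{-1}B^T$ is diagonal. Because $A_{22}^{-1}$ is block-diagonal and the $i$-th row of $B$ is supported entirely on the $H_i$-block, the $(i,j)$-entry of $B A_{22}^{-1}B^T$ vanishes for $i\neq j$, while its $(i,i)$-entry is exactly $\mathbf{1}_{t_i}^T\big((\lambda-\alpha)I_{t_i}-A_{\alpha}(H_i)\big)^{-1}\mathbf{1}_{t_i} = \chi_{A_{\alpha}(H_i)}(\lambda)$ by the definition of the $A_{\alpha}$-coronal. Hence the Schur complement becomes $\operatorname{diag}\big(\lambda-\alpha t_i-(1-\alpha)^2\chi_{A_{\alpha}(H_i)}(\lambda)\big) - A_{\alpha}(G)$, which is precisely the matrix whose determinant defines $g_{A_{\alpha}}$, and multiplying the two factors yields the stated identity. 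The main obstacle is essentially bookkeeping: one must verify the degree shifts ($\alpha t_i$ on the $G$-block, $\alpha$ on each $H_i$-block) and confirm that $A_{22}$ is invertible so that Lemma~\ref{Schur} applies. Since both sides are rational functions of $\lambda$ that agree wherever $A_{22}$ is invertible (all but the finitely many roots of $\prod_i f_{A_{\alpha}(H_i)}(\lambda-\alpha)$), they coincide identically, which completes the argument.
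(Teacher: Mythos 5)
Your proof is correct and follows essentially the same route as the paper's: the same block decomposition of $A_{\alpha}(G\tilde{o}\wedge^{n}_{i=1} H_{i})$ (your $T$, $B$, $\widetilde{A}$ are the paper's $U$, $C$, $W$), the same application of the Schur complement via Lemma~\ref{Schur}, and the same identification of $BA_{22}^{-1}B^{T}$ as the diagonal matrix of coronals $\chi_{A_{\alpha}(H_i)}(\lambda)$. Your closing remark justifying the identity despite the finitely many $\lambda$ where $A_{22}$ is singular is a small rigor improvement over the paper, which applies the Schur complement without comment on invertibility.
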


\begin{proof}
By the Definition \ref{GCG}, we have the adjacency matrix of $G\tilde{o}\wedge^{n}_{i=1} H_{i}$ is given as follows:
\begin{equation}\label{AD-Matrix}
A(G\tilde{o}\wedge^{n}_{i=1} H_{i})=\begin{pmatrix}
A(G) & C \\
C^{T} & K
\end{pmatrix},
\end{equation}
where $C$ is a matrix of order $n\times(t_{1}+t_{2}+\cdots+t_{n})$, $K$ is a square matrix of order $t_{1}+t_{2}+\cdots+t_{n}$ and
\begin{footnotesize}
\[K=\begin{array}{c}
\end{array} \bordermatrix{
       &V(H_{1})&V(H_{2})&\cdots&V(H_{n-1})&V(H_{n})\cr
 &A(H_{1}) & 0 & 0 & \cdots & 0 \cr
 &0 & A(H_{2}) & 0 & \cdots & 0 \cr
 &\vdots & \vdots & \ddots & \vdots & \vdots \cr
&0 & 0 & \cdots & A(H_{n-1}) & 0 \cr
 &0 & 0 & \cdots & 0 & A(H_{n}) \cr} ,C=\begin{array}{c}
v_{1} \\
v_{2} \\
\vdots \\
v_{n-1} \\
v_{n}
\end{array} \bordermatrix{
 &V(H_{1})&V(H_{2})&\cdots&V(H_{n-1})&V(H_{n})\cr
 &\mathbf{1}_{t_{1}}^{T} & 0 & 0 & \cdots & 0 \cr
 &0 & \mathbf{1}_{t_{2}}^{T} & 0 & \cdots & 0 \cr
 &\vdots & \vdots & \ddots & \vdots & \vdots \cr
 &0 & 0 & \cdots & \mathbf{1}_{t_{n-1}}^{T} & 0 \cr
 &0 & 0 & \cdots & 0 & \mathbf{1}_{t_{n}}^{T} \cr}. \]
\end{footnotesize}

Let $U=diag(t_{1}, t_2, \ldots, t_{n})$. Similarly, we have the degree matrix of $G\tilde{o}\wedge^{n}_{i=1} H_{i}$ is obtained as follows:
\begin{equation}\label{De-matrix}
D(G\tilde{o}\wedge^{n}_{i=1} H_{i})=\begin{array}{c}
V(G)\\
V(H_1) \\
V(H_2) \\
\vdots \\
V(H_n)
\end{array} \bordermatrix{
  &V(G) &V(H_{1})&V(H_{2})&\cdots&V(H_{n})\cr
  &D(G)+U&0&0&\cdots&0\cr
  &0          &D(H_{1})+I_{t_1} & 0 & \cdots & 0 \cr
  &0          &0 &D(H_{2})+I_{t_2}  & \cdots& 0  \cr
  &\vdots     &\vdots & \vdots & \ddots & \vdots  \cr
  &0          &0 & 0 & \cdots &D(H_{n})+I_{t_n}  \cr
              }.
\end{equation}
Using (\ref{A-alpha-matrix}), we know
%
\begin{equation}\label{GCA-matrix}
A_{\alpha}(G\tilde{o}\wedge^{n}_{i=1} H_{i})=\alpha D(G\tilde{o}\wedge^{n}_{i=1} H_{i})+(1-\alpha)A(G\tilde{o}\wedge^{n}_{i=1} H_{i}).
\end{equation}

Substituting (\ref{AD-Matrix}) and (\ref{De-matrix}) into (\ref{GCA-matrix}), we obtain the $A_{\alpha}$-matrix of $G\tilde{o}\wedge^{n}_{i=1} H_{i}$ below
$$A_{\alpha}(G\tilde{o}\wedge^{n}_{i=1} H_{i})=\begin{pmatrix}
\alpha U+A_{\alpha}(G) & (1-\alpha)C \\
(1-\alpha)C^{T} & W
\end{pmatrix},$$
where
$$W=
\left(
\begin{array}{ccccccccccccc}
& \alpha I_{t_1}+A_{\alpha}(H_1) & 0 &\cdots & 0 \\
& 0 & \alpha I_{t_2}+A_{\alpha}(H_2)& \cdots & 0 \\
& 0 & 0 & \cdots & 0 \\
&0 & 0 & \cdots &\alpha I_{t_n}+A_{\alpha}(H_n)
\end{array}
\right).
$$

Thus, it follows that
\begin{flushleft}
$f_{A_{\alpha}(G\tilde{o}\wedge^{n}_{i=1} H_{i})}(\lambda)=\operatorname{det}\left(\begin{array}{cc}
\lambda I_{n}-\alpha U-A_{\alpha}(G)&-(1-\alpha)C \\
-(1-\alpha)C^T &\lambda I_{(t_{1}+t_{2}+\cdots+t_{n})}-W
\end{array}\right)$

$
=\operatorname{det}\left(\begin{array}{cc}
\lambda I_{n}-\alpha U-A_{\alpha}(G)  & -(1-\alpha)C \\
-(1-\alpha)C^{T} & {\left[\begin{array}{ccc}
\lambda I_{t_{1}}-\alpha I_{t_{1}}-A_{\alpha}(H_{1}) & 0 & 0 \\
0 & \ddots & 0 \\
0 & 0 & \lambda I_{t_{n}}-\alpha I_{t_{n}}-A_{\alpha}(H_{n}))
\end{array}\right]}
\end{array}\right).
$
\end{flushleft}

Applying Lemma \ref{Schur}, we obtain that
\begin{align*}
&f_{A_{\alpha}(G\tilde{o}\wedge^{n}_{i=1}H_{i})}(\lambda)\\
=&\prod_{i=1}^n f_{A_{\alpha}(H_i)}(\lambda-\alpha) \\
&\cdot\operatorname{det}\left(\lambda I_{n}-\alpha U-A_{\alpha}(G)-(1-\alpha)^{2}C\left[\begin{array}{ccc}
\left(\lambda-\alpha)I_{t_{1}}-A_{\alpha}(H_1\right) & 0 & 0 \\
0 & \ddots & 0 \\
0 & 0 & (\lambda-\alpha)I_{t_{n}}-A_{\alpha}(H_n)
\end{array}\right]^{-1} C^T\right) \\
=&\prod_{i=1}^n f_{A_{\alpha}(H_i)}(\lambda-\alpha) \\
&\cdot\operatorname{det}\left(\lambda I_{n}-\alpha U-A_{\alpha}(G)-(1-\alpha)^{2}C
\begin{scriptsize}
\left[\begin{array}{ccc}
\big(\left(\lambda-\alpha)I_{t_{1}}-A_{\alpha}(H_1\right)\big)^{-1} & 0 & 0 \\
0 & \ddots & 0 \\
0 & 0 & \big((\lambda-\alpha)I_{t_{n}}-A_{\alpha}(H_n)\big)^{-1}
\end{array}\right]\end{scriptsize} C^T\right)\\
=&\prod_{i=1}^n f_{A_{\alpha}(H_i)}(\lambda-\alpha) \\
&\cdot\operatorname{det}\left(\lambda I_{n}-\alpha U-A_{\alpha}(G)
-(1-\alpha)^{2}\begin{scriptsize}\left[\begin{array}{ccc}
\mathbf{1}_{t_{1}}^{T}\big((\lambda-\alpha)I_{t_{1}}-A_{\alpha}(H_1)\big)^{-1}\mathbf{1}_{t_{1}} & 0 & 0 \\
0 & \ddots & 0 \\
0 & 0 & \mathbf{1}_{t_{n}}^{T}\big((\lambda-\alpha)I_{t_{n}}-A_{\alpha}(H_n)\big)^{-1}\mathbf{1}_{t_{n}}
\end{array}\right]\end{scriptsize}\right)\\
=&\prod_{i=1}^n f_{A_{\alpha}(H_i)}(\lambda-\alpha) \\
&\cdot\operatorname{det}\left(\lambda I_{n}-\alpha U-A_{\alpha}(G)
-(1-\alpha)^{2}\left[\begin{array}{ccc}
\chi_{A_{\alpha}(H_{1})}(\lambda) & 0 & 0 \\
0 & \ddots & 0 \\
0 & 0 & \chi_{A_{\alpha}(H_{n})}(\lambda)
\end{array}\right]\right)\\
=&\prod_{i=1}^n f_{A_{\alpha}(H_i)}(\lambda-\alpha) \\
&\cdot\operatorname{det}\left(\left[\begin{array}{ccc}
\lambda-\alpha t_{1}-(1-\alpha)^{2}\chi_{A_{\alpha}(H_{1})}(\lambda) & 0 & 0 \\
0 & \ddots & 0 \\
0 & 0 & \lambda-\alpha t_{n}-(1-\alpha)^{2}\chi_{A_{\alpha}(H_{n})}(\lambda)
\end{array}\right]-A_{\alpha}(G)\right).
\end{align*}

By Notation 3.1, for any real $\alpha\in[0,1]$, it follows that
\begin{equation*}
f_{A_{\alpha}(G\tilde{o}\wedge_{i=1}^n
H_{i})}(\lambda)=\prod_{i=1}^n f_{A_{\alpha}(H_i)}(\lambda-\alpha)\cdot
g_{A_{\alpha}}\big(\chi_{A_{\alpha}(H_1)}(\lambda),\ldots,\chi_{A_{\alpha}(H_n)}(\lambda);G\big).
\end{equation*}
This completes the proof.
\end{proof}

\begin{remark}
The adjacency and the signless Laplacian characteristic polynomial of $A(G\tilde{o}\wedge^{n}_{i=1}H_{i})$ can be easily obtained by Theorem 3.1 for $\alpha=0$ and $\alpha=\frac{1}{2}$, respectively.
\end{remark}

Frucht and Harary \cite{br} first introduced the corona of $G$ and $H$ with the goal of constructing a graph whose automorphism group is the wreath product of the automorphism group of their components. The {\it corona} of $G$ and $H$, denoted $G\circ H$, is the graph obtained by taking one copy of $G$ and $|V(G)|$
copies of $H$, and joining the $i$th vertex of $G$ to every vertex in the $i$th copy of $H$. Therefore, combining Definition \ref{GCG} and Theorem \ref{main}, we have the following corollary.

\begin{corollary}
Let $G$ and $H$ be two graphs of orders $n$ and $n'$, respectively. Then, for any real $\alpha\in[0,1]$, we have
$$
f_{A_{\alpha}(G \circ H)}(\lambda)=\left(f_{A_{\alpha}(H)}(\lambda-\alpha)\right)^n \cdot f_{A_{\alpha}(G)}\left(\lambda-\alpha n'-(1-\alpha)^{2}\chi_{A_{\alpha}(H)}(\lambda)\right) .
$$
\end{corollary}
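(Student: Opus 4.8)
The plan is to recognize the corona $G \circ H$ as the special case of the generalized corona $G\tilde{o}\wedge^{n}_{i=1} H_{i}$ in which every factor graph $H_i$ is taken to be the same graph $H$. Indeed, comparing the construction of $G \circ H$ (take one copy of $G$ together with $n = |V(G)|$ copies of $H$, and join the $i$th vertex of $G$ to all vertices of the $i$th copy of $H$) with Definition \ref{GCG}, the two constructions coincide once we set $H_1 = H_2 = \cdots = H_n = H$. Consequently the entire corollary should follow by substituting this choice into Theorem \ref{main} and then collapsing the resulting expression via Proposition \ref{same-order}.

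First I would apply Theorem \ref{main} directly with $H_i = H$ for each $i$. Since all factors are equal, each has order $t_i = n'$, so the product term simplifies at once to
$$\prod_{i=1}^n f_{A_\alpha(H_i)}(\lambda - \alpha) = \big(f_{A_\alpha(H)}(\lambda - \alpha)\big)^n,$$
which already supplies the first factor in the claimed formula. Next I would handle the $g_{A_\alpha}$ term. Because $H_1 = \cdots = H_n = H$, the coronals all coincide, namely $\chi_{A_\alpha(H_1)}(\lambda) = \cdots = \chi_{A_\alpha(H_n)}(\lambda) = \chi_{A_\alpha(H)}(\lambda)$, and the common order is $q = n'$. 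These are exactly the hypotheses of Proposition \ref{same-order}, so invoking it yields
$$g_{A_\alpha}\big(\chi_{A_\alpha(H)}(\lambda), \ldots, \chi_{A_\alpha(H)}(\lambda); G\big) = f_{A_\alpha(G)}\big(\lambda - \alpha n' - (1-\alpha)^2 \chi_{A_\alpha(H)}(\lambda)\big).$$
Multiplying the two factors then gives precisely the stated identity.

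There is essentially no genuine obstacle here: the corollary is a direct specialization, and the only point that deserves a line of justification is the verification that $G \circ H$ really is $G\tilde{o}\wedge^{n}_{i=1} H_{i}$ with identical factors, after which Theorem \ref{main} and Proposition \ref{same-order} do all the work. If one preferred to avoid citing Proposition \ref{same-order}, I would note that the diagonal matrix appearing inside the $g_{A_\alpha}$ determinant becomes the scalar matrix $\big(\lambda - \alpha n' - (1-\alpha)^2 \chi_{A_\alpha(H)}(\lambda)\big) I_n$, so by the very definition of the characteristic polynomial the determinant equals $f_{A_\alpha(G)}$ evaluated at that scalar; this is the only computation involved, and it is immediate.
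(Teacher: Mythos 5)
Your proposal is correct and is exactly the paper's argument: the paper states this corollary as an immediate consequence of recognizing $G\circ H$ as $G\tilde{o}\wedge^{n}_{i=1}H_{i}$ with $H_1=\cdots=H_n=H$ and combining Definition \ref{GCG} with Theorem \ref{main} (and implicitly Proposition \ref{same-order}, since the diagonal matrix in $g_{A_{\alpha}}$ becomes scalar). Your closing observation that the determinant collapses by the very definition of the characteristic polynomial is precisely the content of that proposition, so nothing is missing.
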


Specially, if $H_1, H_2, \ldots, H_n$ are $n$ graphs with same order $q$ and same $A_{\alpha}$-coronal, i.e.
$\chi_{A_{\alpha}\left(H_1\right)}(\lambda)=\chi_{A_{\alpha}\left(H_2\right)}(\lambda)=\cdots=\chi_{A_{\alpha}\left(H_n\right)}(\lambda)
=\chi(\lambda)$, then by Proposition \ref{same-order} and Theorem \ref{main}, we know the corollary below.
\begin{corollary}
Let $G$ be a graph of order $n$ and $H_1, H_2, \ldots, H_n$ be $n$ graphs each of them has order $q$ such that $\chi_{A_{\alpha}(H_{1})}(\lambda)=\cdots=\chi_{A_{\alpha}(H_{n})}(\lambda)
=\chi(\lambda)$. For any real $\alpha\in[0,1]$, then
$$
f_{A_{\alpha}(G\tilde{o}\wedge^{n}_{i=1}H_{i})}(\lambda)
=\big(\prod_{i=1}^n f_{A_{\alpha}(H_{i})}(\lambda-\alpha)\big) \cdot f_{A_{\alpha}(G)}\left(\lambda-\alpha q-(1-\alpha)^{2}\chi(\lambda)\right) .
$$

Particularly, if $H_1, H_2, \ldots, H_n$ be $r$-regular graphs each of them has order $q$, then by Lemma \ref{r-regular}, we have
$$
f_{A_{\alpha}(G\tilde{o}\wedge^{n}_{i=1}H_{i})}(\lambda)
=\big(\prod_{i=1}^n f_{A_{\alpha}(H_{i})}(\lambda-\alpha)\big) \cdot f_{A_{\alpha}(G)}\left(\lambda-\alpha q-(1-\alpha)^{2}\frac{n'}{\lambda-r-\alpha}\right).
$$
\end{corollary}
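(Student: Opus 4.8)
The plan is to obtain both displayed identities as direct specializations of the general formula in Theorem \ref{main}, so that no fresh determinant computation is needed. First I would apply Theorem \ref{main} verbatim to $G\tilde{o}\wedge^{n}_{i=1}H_{i}$, which already yields $f_{A_{\alpha}(G\tilde{o}\wedge^{n}_{i=1}H_{i})}(\lambda)=\prod_{i=1}^n f_{A_{\alpha}(H_i)}(\lambda-\alpha)\cdot g_{A_{\alpha}}(\chi_{A_{\alpha}(H_1)}(\lambda),\ldots,\chi_{A_{\alpha}(H_n)}(\lambda);G)$. The product factor is identical to the one appearing in the target identity, so the whole problem collapses to rewriting the single factor $g_{A_{\alpha}}(\ldots;G)$ under the two standing hypotheses $t_1=\cdots=t_n=q$ and $\chi_{A_{\alpha}(H_1)}(\lambda)=\cdots=\chi_{A_{\alpha}(H_n)}(\lambda)=\chi(\lambda)$.

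For the first identity, the key observation is that each diagonal entry of the matrix defining $g_{A_{\alpha}}$ equals $\lambda-\alpha t_i-(1-\alpha)^2\chi_{A_{\alpha}(H_i)}(\lambda)$; under the hypotheses these $n$ entries all collapse to one common scalar $\mu:=\lambda-\alpha q-(1-\alpha)^2\chi(\lambda)$. Hence the bracketed diagonal block is exactly $\mu I_n$, and the determinant in the definition of $g_{A_{\alpha}}$ becomes $\det(\mu I_n-A_{\alpha}(G))=f_{A_{\alpha}(G)}(\mu)$ by the very definition of the characteristic polynomial of $A_{\alpha}(G)$. This is precisely the content of Proposition \ref{same-order}, so I would cite it to replace $g_{A_{\alpha}}(\ldots;G)$ by $f_{A_{\alpha}(G)}(\lambda-\alpha q-(1-\alpha)^2\chi(\lambda))$ and read off the first displayed formula.

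For the second identity I would only need to evaluate $\chi(\lambda)$ explicitly. Since each $H_i$ is $r$-regular of order $q$, Lemma \ref{r-regular} gives $\chi_{A_{\alpha}(H_i)}(\lambda)=\frac{q}{\lambda-r-\alpha}$ for every $i$, so the common coronal is $\chi(\lambda)=\frac{q}{\lambda-r-\alpha}$ (the order $q$ of each $H_i$ being the numerator that appears in the shifted argument). Substituting this into the first identity gives the stated closed form at once. I expect no genuine obstacle here: the entire argument is a two-step specialization of the main theorem, and the only point requiring care is the bookkeeping that confirms the shift in the argument of $f_{A_{\alpha}(G)}$ is exactly $\lambda-\alpha q-(1-\alpha)^2\chi(\lambda)$ and that the regularity of the $H_i$ enters only through the value of $\chi(\lambda)$ supplied by Lemma \ref{r-regular}.
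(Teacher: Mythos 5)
Your proposal is correct and follows exactly the paper's route: the corollary is obtained by specializing Theorem \ref{main} and invoking Proposition \ref{same-order} for the first identity, then Lemma \ref{r-regular} for the regular case. Note that your substitution $\chi(\lambda)=\frac{q}{\lambda-r-\alpha}$ is the right one; the paper's displayed formula writes $n'$ in the numerator, which is evidently a typographical slip (carried over from the preceding corollary on $G\circ H$) and should indeed be $q$, the common order of the $H_i$.
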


%
As an application of above results, we construct infinitely many pairs of non-regular $A_{\alpha}$-cospectral graphs as follows.

\begin{corollary}
Let $G_{1}$ and $G_{2}$ be two $A_{\alpha}$-cospectral graphs of order $n$ and $H_1, H_2, \ldots, H_n$ are $n$ graphs with same order $q$ and same $A_{\alpha}$-coronal. Then, for any real $\alpha\in[0,1]$, $G_{1}\tilde{o}\wedge^{n}_{i=1}H_{i}$ and $G_{2}\tilde{o}\wedge^{n}_{i=1}H_{i}$ are $A_{\alpha}$-cospectral.
\end{corollary}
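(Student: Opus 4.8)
The plan is to apply the preceding corollary --- the one giving $f_{A_{\alpha}(G\tilde{o}\wedge^{n}_{i=1}H_{i})}(\lambda)$ in closed form when all the $H_i$ share a common order $q$ and a common $A_{\alpha}$-coronal $\chi(\lambda)$ --- separately to $G_1$ and to $G_2$, and then to read off equality of the two resulting characteristic polynomials from the two hypotheses. First I would note that the graphs $H_1,\ldots,H_n$ have the same order $q$ and satisfy $\chi_{A_{\alpha}(H_1)}(\lambda)=\cdots=\chi_{A_{\alpha}(H_n)}(\lambda)=\chi(\lambda)$, so that corollary applies verbatim to each of the two generalized coronas and yields
$$
f_{A_{\alpha}(G_k\tilde{o}\wedge^{n}_{i=1}H_{i})}(\lambda)
=\Big(\prod_{i=1}^n f_{A_{\alpha}(H_{i})}(\lambda-\alpha)\Big)\cdot f_{A_{\alpha}(G_k)}\big(\lambda-\alpha q-(1-\alpha)^{2}\chi(\lambda)\big),\qquad k=1,2.
$$
The leading product factor $\prod_{i=1}^n f_{A_{\alpha}(H_{i})}(\lambda-\alpha)$ depends only on $H_1,\ldots,H_n$ and is therefore literally the same expression in both lines, so the entire comparison collapses to the second factor.

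Next I would invoke the cospectrality hypothesis. That $G_1$ and $G_2$ are $A_{\alpha}$-cospectral means precisely that $f_{A_{\alpha}(G_1)}$ and $f_{A_{\alpha}(G_2)}$ agree as polynomials, hence $f_{A_{\alpha}(G_1)}(\mu)=f_{A_{\alpha}(G_2)}(\mu)$ for every value $\mu$. Substituting the common argument $\mu=\lambda-\alpha q-(1-\alpha)^{2}\chi(\lambda)$ into this identity gives
$$
f_{A_{\alpha}(G_1)}\big(\lambda-\alpha q-(1-\alpha)^{2}\chi(\lambda)\big)
=f_{A_{\alpha}(G_2)}\big(\lambda-\alpha q-(1-\alpha)^{2}\chi(\lambda)\big).
$$
Combining this with the identical product factors above shows that $G_1\tilde{o}\wedge^{n}_{i=1}H_{i}$ and $G_2\tilde{o}\wedge^{n}_{i=1}H_{i}$ have equal $A_{\alpha}$-characteristic polynomials, and therefore the same $A_{\alpha}$-spectrum.

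There is no computational obstacle here --- the content is a single substitution into a closed form --- so the only matters deserving care are conceptual. The first is the substitution step itself: $f_{A_{\alpha}(G_k)}$ is being composed with the \emph{rational} function $\lambda-\alpha q-(1-\alpha)^{2}\chi(\lambda)$ of $\lambda$, so I should stress that $A_{\alpha}$-cospectrality is an equality of the polynomials $f_{A_{\alpha}(G_1)}$ and $f_{A_{\alpha}(G_2)}$ (equivalently, of their full coefficient lists), not merely a coincidence of eigenvalue sets; this is exactly what licenses evaluating both sides at the same non-constant argument and concluding that the two compositions agree identically in $\lambda$. The second is that the definition of cospectral in this paper also requires non-isomorphism, so I would close by observing that $G_1\not\cong G_2$ forces $G_1\tilde{o}\wedge^{n}_{i=1}H_{i}\not\cong G_2\tilde{o}\wedge^{n}_{i=1}H_{i}$ (the kernel graph is recoverable from the construction), whence the two generalized coronas are genuinely $A_{\alpha}$-cospectral.
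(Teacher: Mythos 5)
Your proof is correct and takes essentially the same route as the paper, which states this corollary as an immediate consequence of the preceding one: apply the closed-form factorization $f_{A_{\alpha}(G_k\tilde{o}\wedge^{n}_{i=1}H_{i})}(\lambda)=\bigl(\prod_{i=1}^n f_{A_{\alpha}(H_{i})}(\lambda-\alpha)\bigr)\cdot f_{A_{\alpha}(G_k)}\bigl(\lambda-\alpha q-(1-\alpha)^{2}\chi(\lambda)\bigr)$ to $k=1,2$ and use that $A_{\alpha}$-cospectrality means equality of the characteristic polynomials. Your two cautionary remarks (that the substitution of a rational argument is licensed by equality of coefficient lists, and that non-isomorphism of the coronas should be noted) are sound refinements of the same argument rather than a different approach.
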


\begin{corollary}
Let $G$ be a graphs of order $n$ and $H_1, H_2, \ldots, H_{2n}$ be a family of $A_{\alpha}$-cospectral graphs such that $\chi_{A_{\alpha}(H_{1})}(\lambda)=\cdots=\chi_{A_{\alpha}(H_{2n})}(\lambda)
=\chi(\lambda)$. Then, for any real $\alpha\in[0,1]$, $G\tilde{o}\wedge^{n}_{i=1}H_{i}$ and $G\tilde{o}\wedge^{2n}_{i=n+1}H_{i}$ are $A_{\alpha}$-cospectral.
\end{corollary}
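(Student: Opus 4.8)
The plan is to prove the two graphs are $A_{\alpha}$-cospectral by showing they have identical $A_{\alpha}$-characteristic polynomials, and the whole argument is a direct application of Theorem \ref{main} together with the observation that both factors appearing in its conclusion are invariants preserved by the hypotheses. First I would record two easy consequences of the assumption that $H_1, H_2, \ldots, H_{2n}$ are mutually $A_{\alpha}$-cospectral: since the number of $A_{\alpha}$-eigenvalues (counted with multiplicity) equals the order of a graph, all the $H_i$ share a common order $q$; and mutual cospectrality means they all share a common $A_{\alpha}$-characteristic polynomial, say $f_{A_{\alpha}(H_i)}(\lambda)=f(\lambda)$ for every $i$.

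Next I would apply Theorem \ref{main} to each of the two graphs. For $G\tilde{o}\wedge^{n}_{i=1}H_{i}$ it gives
\[
f_{A_{\alpha}(G\tilde{o}\wedge^{n}_{i=1}H_{i})}(\lambda)=\prod_{i=1}^n f_{A_{\alpha}(H_i)}(\lambda-\alpha)\cdot g_{A_{\alpha}}\big(\chi_{A_{\alpha}(H_1)}(\lambda),\ldots,\chi_{A_{\alpha}(H_n)}(\lambda); G\big),
\]
and for $G\tilde{o}\wedge^{2n}_{i=n+1}H_{i}$ the analogous identity with the indices shifted to $n+1,\ldots,2n$. I would then compare the two factors separately.

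For the product factor, the common-characteristic-polynomial observation gives $\prod_{i=1}^n f_{A_{\alpha}(H_i)}(\lambda-\alpha)=f(\lambda-\alpha)^n=\prod_{i=n+1}^{2n} f_{A_{\alpha}(H_i)}(\lambda-\alpha)$, so the two products agree. For the $g_{A_{\alpha}}$ factor, the hypothesis $\chi_{A_{\alpha}(H_1)}(\lambda)=\cdots=\chi_{A_{\alpha}(H_{2n})}(\lambda)=\chi(\lambda)$ means that every coronal argument equals $\chi(\lambda)$ in both cases, while the common order $q$ makes every diagonal entry equal to $\lambda-\alpha q-(1-\alpha)^{2}\chi(\lambda)$; hence by Proposition \ref{same-order} both $g_{A_{\alpha}}$ factors equal $f_{A_{\alpha}(G)}\big(\lambda-\alpha q-(1-\alpha)^{2}\chi(\lambda)\big)$. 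Multiplying the two equal factors yields $f_{A_{\alpha}(G\tilde{o}\wedge^{n}_{i=1}H_{i})}(\lambda)=f_{A_{\alpha}(G\tilde{o}\wedge^{2n}_{i=n+1}H_{i})}(\lambda)$, which is exactly the claim.

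There is no genuine analytic difficulty here; the only point requiring care—and the step I would single out—is the need to know that $A_{\alpha}$-cospectral graphs have equal orders, since the orders enter the diagonal of $g_{A_{\alpha}}$ through the terms $\alpha t_i$ and must match for the two $g_{A_{\alpha}}$ factors to coincide. Once that is noted, the result is an immediate bookkeeping consequence of the factorization in Theorem \ref{main}: both the spectral product and the coronal-plus-order data feeding $g_{A_{\alpha}}$ are preserved under the hypotheses, so the two characteristic polynomials cannot differ.
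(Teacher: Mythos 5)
Your proposal is correct and follows exactly the route the paper intends: apply Theorem \ref{main} to both generalized coronas, note the spectral products $\prod f_{A_{\alpha}(H_i)}(\lambda-\alpha)$ coincide by cospectrality, and use Proposition \ref{same-order} (equivalently Corollary 3.2) with the common order and common coronal to see that both $g_{A_{\alpha}}$ factors reduce to $f_{A_{\alpha}(G)}\big(\lambda-\alpha q-(1-\alpha)^{2}\chi(\lambda)\big)$. Your explicit remark that cospectral graphs necessarily have equal order (so the diagonal terms $\alpha t_i$ match) is a point the paper leaves implicit, and it is the right detail to flag.
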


\begin{theorem}\label{pq bipartite}
Let $G$ be an $(r_1,r_2)$-semiregular bipartite graph and let $V(G)=(P, Q)$ with $|P|=p$ and $|Q|=n-p$. Suppose $H_1\cong H_2\cong \cdots \cong H_p \cong Z_1$ with $|V(Z_1)|=n_{1}$ and $H_{p+1} \cong \cdots \cong H_n \cong Z_2$ with $|V(Z_2)|=n_{2}$. Then, for any real $\alpha\in[0,1]$,
\begin{align*}
&f_{A_{\alpha}(G\tilde{o}\wedge^{n}_{i=1}H_{i})}(\lambda)\\
&=\big(f_{A_{\alpha}(Z_{1})}(\lambda-\alpha)\big)^p \cdot \big(f_{A_{\alpha}(Z_2)}(\lambda-\alpha)\big)^{n-p}\cdot
\big(\lambda-\alpha n_{2}-\alpha r_{2}-(1-\alpha)^{2}\chi_{A_{\alpha}(Z_{2})}(\lambda)\big)^{n-2p}
\\
&(1-\alpha)^{2p}\cdot l\left(\frac{\sqrt{\big(\lambda-\alpha n_{1}- \alpha r_{1}-(1-\alpha)^{2}\chi_{A_{\alpha}(Z_{1})}(\lambda)\big)\cdot\big(\lambda-\alpha n_{2}- \alpha r_{2}-(1-\alpha)^{2}\chi_{A_{\alpha}(Z_{2})}(\lambda)\big)}}{1-\alpha}\right),
\end{align*}
or
\begin{align*}
&f_{A_{\alpha}(G\tilde{o}\wedge^{n}_{i=1}H_{i})}(\lambda)\\
&=\big(f_{A_{\alpha}(Z_{1})}(\lambda-\alpha)\big)^p \cdot \big(f_{A_{\alpha}(Z_2)}(\lambda-\alpha)\big)^{n-p}\cdot
\big(\lambda-\alpha n_{1}-\alpha r_{1}-(1-\alpha)^{2}\chi_{A_{\alpha}(Z_{1})}(\lambda)\big)^{2p-n}\\
& (1-\alpha)^{2(n-p)}\cdot h\left(\frac{\sqrt{\big(\lambda-\alpha n_{1}-\alpha r_{1}-(1-\alpha)^{2}\chi_{A_{\alpha}(Z_{1})}(\lambda)\big)\cdot\big(\lambda-\alpha n_{2}-\alpha r_{2}-(1-\alpha)^{2}\chi_{A_{\alpha}(Z_{2})}(\lambda)\big)}}{1-\alpha}\right).
\end{align*}
\end{theorem}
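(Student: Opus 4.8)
The plan is to apply Theorem \ref{main} and then reduce the remaining factor $g_{A_\alpha}$ to the quantities $l$ and $h$ of Proposition \ref{Wmatrix}. First I would invoke Theorem \ref{main}: since $H_1\cong\cdots\cong H_p\cong Z_1$ and $H_{p+1}\cong\cdots\cong H_n\cong Z_2$, the product $\prod_{i=1}^n f_{A_\alpha(H_i)}(\lambda-\alpha)$ collapses to $\big(f_{A_\alpha(Z_1)}(\lambda-\alpha)\big)^p\cdot\big(f_{A_\alpha(Z_2)}(\lambda-\alpha)\big)^{n-p}$, which is exactly the leading factor appearing in both claimed expressions. It then remains to evaluate $g_{A_\alpha}=\det(\Lambda-A_\alpha(G))$, where $\Lambda$ is the diagonal matrix from Notation 3.1 whose first $p$ entries equal $\lambda-\alpha n_1-(1-\alpha)^2\chi_{A_\alpha(Z_1)}(\lambda)$ and whose last $n-p$ entries equal $\lambda-\alpha n_2-(1-\alpha)^2\chi_{A_\alpha(Z_2)}(\lambda)$.

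Next I would exploit the semiregular bipartite structure of $G$. Writing $A(G)=\left(\begin{smallmatrix}0&W\\W^T&0\end{smallmatrix}\right)$ with $W$ of size $p\times(n-p)$ as in Proposition \ref{Wmatrix}, and using $D(G)=\mathrm{diag}(r_1 I_p,\,r_2 I_{n-p})$, one gets $A_\alpha(G)=\left(\begin{smallmatrix}\alpha r_1 I_p & (1-\alpha)W\\(1-\alpha)W^T & \alpha r_2 I_{n-p}\end{smallmatrix}\right)$. Introducing the shorthands $x=\lambda-\alpha n_1-\alpha r_1-(1-\alpha)^2\chi_{A_\alpha(Z_1)}(\lambda)$ and $y=\lambda-\alpha n_2-\alpha r_2-(1-\alpha)^2\chi_{A_\alpha(Z_2)}(\lambda)$, the matrix $\Lambda-A_\alpha(G)$ becomes the block form $\left(\begin{smallmatrix}x I_p & -(1-\alpha)W\\-(1-\alpha)W^T & y I_{n-p}\end{smallmatrix}\right)$, which is precisely the object handled by the two Schur-complement expansions of Proposition \ref{Wmatrix}, now with scalar-weighted diagonal blocks.

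Finally I would apply Lemma \ref{Schur} in its two forms. Eliminating the $(n-p)\times(n-p)$ block (valid when $y\neq 0$) gives $g_{A_\alpha}=y^{n-p}\det\!\big(x I_p-\tfrac{(1-\alpha)^2}{y}WW^T\big)$; factoring $\tfrac{(1-\alpha)^2}{y}$ out of the $p\times p$ determinant and setting $\mu^2=\tfrac{xy}{(1-\alpha)^2}$ turns this into $y^{n-2p}(1-\alpha)^{2p}\det(\mu^2 I_p-WW^T)=y^{n-2p}(1-\alpha)^{2p}\,l(\mu)$ with $\mu=\tfrac{\sqrt{xy}}{1-\alpha}$, which is the first claimed expression. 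Eliminating the $p\times p$ block instead (valid when $x\neq 0$) yields symmetrically $x^{2p-n}(1-\alpha)^{2(n-p)}\,h(\mu)$, the second expression. Since $l$ and $h$ depend on their argument only through $\mu^2=xy/(1-\alpha)^2$, both sides are genuine rational functions of $\lambda$, so the formal square root causes no ambiguity.

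The main obstacle is the bookkeeping in the last step: correctly pulling the scalar $\tfrac{(1-\alpha)^2}{y}$ (resp. $\tfrac{(1-\alpha)^2}{x}$) through a $p\times p$ (resp. $(n-p)\times(n-p)$) determinant so as to produce the exponents $n-2p$ and $2p-n$ together with the powers $(1-\alpha)^{2p}$ and $(1-\alpha)^{2(n-p)}$, and recognizing the resulting matrix as exactly $\mu^2 I-WW^T$ (resp. $\mu^2 I-W^TW$) so that Proposition \ref{Wmatrix} applies verbatim. A minor point is that the Schur complement requires invertibility of one diagonal block; since the final identity is an equality of rational functions in $\lambda$, it suffices to verify it on the dense set where $y\neq 0$ (resp. $x\neq 0$) and extend by continuity.
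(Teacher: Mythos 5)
Your proposal is correct and takes essentially the same approach as the paper: apply Theorem \ref{main} so the product collapses to $\big(f_{A_{\alpha}(Z_1)}(\lambda-\alpha)\big)^p\big(f_{A_{\alpha}(Z_2)}(\lambda-\alpha)\big)^{n-p}$, use the $(r_1,r_2)$-semiregular bipartite structure to write the remaining determinant as a $2\times 2$ block matrix with diagonal blocks $xI_p$ and $yI_{n-p}$ and off-diagonal blocks $-(1-\alpha)W$ and $-(1-\alpha)W^{T}$, then finish with Lemma \ref{Schur} and Proposition \ref{Wmatrix}. Your write-up simply makes explicit the scalar-factoring bookkeeping (producing the exponents $n-2p$, $2p-n$ and the powers of $1-\alpha$) and the invertibility/density caveat that the paper compresses into its final appeal to those two results.
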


\begin{proof}
By the same argument given in the proof of Theorem \ref{main}, we have
$$
\begin{aligned}
& f_{A_{\alpha}(G\tilde{o}\wedge^{n}_{i=1}H_{i})}(\lambda)\\
=&\big(f_{A_{\alpha}(Z_{1})}(\lambda-\alpha)\big)^p \cdot \big(f_{A_{\alpha}(Z_2)}(\lambda-\alpha)\big)^{n-p} \\
& \cdot \operatorname{det}\left(\left[\begin{array}{cc}
(\lambda-\alpha n_{1}-(1-\alpha)^{2}\chi_{A_{\alpha}(Z_1)}(\lambda)) I_{p} & 0 \\
0 & (\lambda-\alpha n_{2}-(1-\alpha)^{2}\chi_{A_{\alpha}(Z_2)}(\lambda)) I_{n-p}
\end{array}\right]-A_{\alpha}(G)\right)\\
=&\big(f_{A_{\alpha}(Z_{1})}(\lambda-\alpha)\big)^p \cdot \big(f_{A_{\alpha}(Z_2)}(\lambda-\alpha)\big)^{n-p} \\
& \cdot \operatorname{det}\left(\begin{footnotesize}\left[\begin{array}{cc}
\big(\lambda-\alpha n_{1}-(1-\alpha)^{2}\chi_{A_{\alpha}(Z_1)}(\lambda)\big)I_{p} & 0 \\
0 & \big(\lambda-\alpha n_{2}-(1-\alpha)^{2}\chi_{A_{\alpha}(Z_2)}(\lambda)\big)I_{n-p}
\end{array}\right]\end{footnotesize}-\alpha D(G)-(1-\alpha)A(G)\right).\\
\end{aligned}
$$

Since $G$ is an $(r_1,r_2)$-semiregular bipartite graph, there is a matrix $Y$ of order $p\times(n-p)$ such that
$$
\begin{aligned}
& f_{A_{\alpha}(G\tilde{o}\wedge^{n}_{i=1}H_{i})}(\lambda)\\
=&\big(f_{A_{\alpha}(Z_{1})}(\lambda-\alpha)\big)^p \cdot \big(f_{A_{\alpha}(Z_2)}(\lambda-\alpha)\big)^{n-p} \\
& \cdot \operatorname{det}\left(\left[\begin{array}{cc}
\big(\lambda-\alpha n_{1}-
\alpha r_{1}-(1-\alpha)^{2}\chi_{A_{\alpha}(Z_1)}(\lambda)\big)I_{p} &-(1-\alpha)Y \\
-(1-\alpha)Y^{T} & \big(\lambda-\alpha n_{2}-
\alpha r_{2}-(1-\alpha)^{2}\chi_{A_{\alpha}(Z_2)}(\lambda)\big)I_{n-p}
\end{array}\right]\right).\\
\end{aligned}
$$
Therefore, by Lemma \ref{Schur} and Proposition \ref{Wmatrix}, we obtain that
\begin{align*}
&f_{A_{\alpha}(G\tilde{o}\wedge^{n}_{i=1}H_{i})}(\lambda)\\
&=\big(f_{A_{\alpha}(Z_{1})}(\lambda-\alpha)\big)^p \cdot \big(f_{A_{\alpha}(Z_2)}(\lambda-\alpha)\big)^{n-p}\cdot
\big(\lambda-\alpha n_{2}-\alpha r_{2}-(1-\alpha)^{2}\chi_{A_{\alpha}(Z_{2})}(\lambda)\big)^{n-2p}
\\
&(1-\alpha)^{2p}\cdot l\left(\frac{\sqrt{\big(\lambda-\alpha n_{1}- \alpha r_{1}-(1-\alpha)^{2}\chi_{A_{\alpha}(Z_{1})}(\lambda)\big)\cdot\big(\lambda-\alpha n_{2}- \alpha r_{2}-(1-\alpha)^{2}\chi_{A_{\alpha}(Z_{2})}(\lambda)\big)}}{1-\alpha}\right),
\end{align*}
or
\begin{align*}
&f_{A_{\alpha}(G\tilde{o}\wedge^{n}_{i=1}H_{i})}(\lambda)\\
&=\big(f_{A_{\alpha}(Z_{1})}(\lambda-\alpha)\big)^p \cdot \big(f_{A_{\alpha}(Z_2)}(\lambda-\alpha)\big)^{n-p}\cdot
\big(\lambda-\alpha n_{1}-\alpha r_{1}-(1-\alpha)^{2}\chi_{A_{\alpha}(Z_{1})}(\lambda)\big)^{2p-n}\\
& (1-\alpha)^{2(n-p)}\cdot h\left(\frac{\sqrt{\big(\lambda-\alpha n_{1}-\alpha r_{1}-(1-\alpha)^{2}\chi_{A_{\alpha}(Z_{1})}(\lambda)\big)\cdot\big(\lambda-\alpha n_{2}-\alpha r_{2}-(1-\alpha)^{2}\chi_{A_{\alpha}(Z_{2})}(\lambda)\big)}}{1-\alpha}\right).
\end{align*}
\end{proof}

\begin{corollary}\label{KS}
Let $G$ be an $(r_1,r_2)$-semiregular bipartite graph and let $V(G)=(P, Q)$ with $|P|=p$ and $|Q|=n-p$. Let $H_1 \cong H_2\cong \ldots \cong H_p \cong \overline{K_{n_1}}$ and $H_{p+1}\cong \ldots \cong H_n \cong \overline{K_{n_2}}$. If $|V(\overline{K_{n_1}})|=n_{1}$ and $|V(\overline{K_{n_{2}}})|=n_{2}$, then for any real $\alpha\in[0,1]$,  we see
\begin{align*}
&f_{A_{\alpha}(G\tilde{o}\wedge^{n}_{i=1}H_{i})}(\lambda)
=(\lambda-\alpha)^{pn_{1}+(n-p)n_{2}}\cdot\left(\lambda-\alpha n_{2}-\alpha r_{2}-(1-\alpha)^{2}\frac{n_{2}}{\lambda-\alpha}\right)^{n-2p}\\
 &\cdot(1-\alpha)^{2p}\cdot l\left(\frac{\sqrt{\big(\lambda-\alpha n_{1}-\alpha r_{1}-(1-\alpha)^{2}\frac{n_{1}}{\lambda-\alpha}\big)\cdot\big(\lambda-\alpha n_{2}-\alpha r_{2}-(1-\alpha)^{2}\frac{n_{2}}{\lambda-\alpha}\big)}}{1-\alpha}\right),
\end{align*}
or
\begin{align*}
&f_{A_{\alpha}(G\tilde{o}\wedge^{n}_{i=1}H_{i})}(\lambda)
=(\lambda-\alpha)^{pn_{1}+(n-p)n_{2}}\cdot\left(\lambda-\alpha n_{1}-\alpha r_{1}-(1-\alpha)^{2}\frac{n_{1}}{\lambda-\alpha}\right)^{2p-n}\\
 &\cdot(1-\alpha)^{2(n-p)}\cdot h\left(\frac{\sqrt{\big(\lambda-\alpha n_{1}-\alpha r_{1}-(1-\alpha)^{2}\frac{n_{1}}{\lambda-\alpha}\big)\cdot\big(\lambda-\alpha n_{2}-\alpha r_{2}-(1-\alpha)^{2}\frac{n_{2}}{\lambda-\alpha}\big)}}{1-\alpha}\right).
\end{align*}
\end{corollary}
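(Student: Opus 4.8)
The plan is to obtain Corollary \ref{KS} as a direct specialization of Theorem \ref{pq bipartite}, taking $Z_1 = \overline{K_{n_1}}$ and $Z_2 = \overline{K_{n_2}}$. The only new ingredient needed is to evaluate the two graph-dependent quantities appearing in Theorem \ref{pq bipartite}, namely $f_{A_{\alpha}(Z_j)}(\lambda - \alpha)$ and $\chi_{A_{\alpha}(Z_j)}(\lambda)$, for the edgeless graphs $\overline{K_{n_j}}$. Everything else in the two formulas—the semiregularity parameters $r_1, r_2$, the exponents $n - 2p$ and $2p - n$, the factors $(1-\alpha)^{2p}$ and $(1-\alpha)^{2(n-p)}$, and the functions $l$ and $h$—is carried over verbatim.

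First I would observe that $\overline{K_{n_j}}$ has no edges, so both its adjacency matrix $A(\overline{K_{n_j}})$ and its degree matrix $D(\overline{K_{n_j}})$ equal the $n_j \times n_j$ zero matrix; hence $A_{\alpha}(\overline{K_{n_j}}) = \alpha D + (1-\alpha) A = 0$. Consequently $f_{A_{\alpha}(\overline{K_{n_j}})}(\lambda - \alpha) = \det\big((\lambda - \alpha) I_{n_j}\big) = (\lambda - \alpha)^{n_j}$. Raising these to the appropriate powers, the leading product in Theorem \ref{pq bipartite} collapses to $(\lambda - \alpha)^{p n_1}\,(\lambda - \alpha)^{(n-p) n_2} = (\lambda - \alpha)^{p n_1 + (n-p) n_2}$, which is precisely the prefactor claimed in Corollary \ref{KS}.

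Next I would compute the $A_{\alpha}$-coronal. Since $\overline{K_{n_j}}$ is $0$-regular, Lemma \ref{r-regular} applies with $r = 0$, giving $\chi_{A_{\alpha}(\overline{K_{n_j}})}(\lambda) = \frac{n_j}{\lambda - \alpha}$. Substituting this into each occurrence of $\chi_{A_{\alpha}(Z_j)}(\lambda)$ in the two expressions of Theorem \ref{pq bipartite} replaces every term $(1-\alpha)^{2}\,\chi_{A_{\alpha}(Z_j)}(\lambda)$ by $(1-\alpha)^{2}\frac{n_j}{\lambda - \alpha}$, both inside the isolated factors $\big(\lambda - \alpha n_2 - \alpha r_2 - (1-\alpha)^2\chi_{A_{\alpha}(Z_2)}(\lambda)\big)^{n-2p}$ and $\big(\lambda - \alpha n_1 - \alpha r_1 - (1-\alpha)^2\chi_{A_{\alpha}(Z_1)}(\lambda)\big)^{2p-n}$, and inside the radicands of $l$ and $h$. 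Collecting these substitutions yields exactly the two displayed formulas of Corollary \ref{KS}.

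I expect no genuine obstacle here: the argument is a routine substitution into an already-proved theorem. The only points requiring care are bookkeeping ones, namely verifying that $\overline{K_{n_j}}$ is indeed $0$-regular so that Lemma \ref{r-regular} is applicable, and confirming that the vanishing of $A_{\alpha}(\overline{K_{n_j}})$ produces the clean power $(\lambda - \alpha)^{n_j}$. Since both factor families $Z_1$ and $Z_2$ are edgeless, all hypotheses of Theorem \ref{pq bipartite} are met, and the corollary follows at once.
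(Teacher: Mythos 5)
Your proposal is correct and follows essentially the same route as the paper: the paper likewise specializes Theorem \ref{pq bipartite} with $Z_1=\overline{K_{n_1}}$, $Z_2=\overline{K_{n_2}}$, invoking Lemma \ref{r-regular} with $r=0$ to get $\chi_{A_{\alpha}(\overline{K_{n_j}})}(\lambda)=\frac{n_j}{\lambda-\alpha}$ and then substituting. Your explicit verification that $A_{\alpha}(\overline{K_{n_j}})=0$, so that $f_{A_{\alpha}(\overline{K_{n_j}})}(\lambda-\alpha)=(\lambda-\alpha)^{n_j}$, is a detail the paper leaves implicit but is entirely consistent with its argument.
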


\begin{proof}
Since $\overline{K_{n_1}}$ and $\overline{K_{n_2}}$ are two graphs with $n_{1}$ isolated vertices and $n_{2}$ isolated vertices, respectively, the vertices of both graphs have zero degrees. Thus, by Lemma \ref{r-regular}, we find that
$$\chi_{A_{\alpha}(\overline{K_{n_1}})}(\lambda)=\frac{n_{1}}{\lambda-\alpha},~~~~~~~~\\
\chi_{A_{\alpha}(\overline{K_{n_2}})}(\lambda)=\frac{n_{2}}{\lambda-\alpha}.$$
Hence, by Theorem \ref{pq bipartite}, we have
\begin{align*}
&f_{A_{\alpha}(G\tilde{o}\wedge^{n}_{i=1}H_{i})}(\lambda)
=(\lambda-\alpha)^{pn_{1}+(n-p)n_{2}}\cdot\left(\lambda-\alpha n_{2}-\alpha r_{2}-(1-\alpha)^{2}\frac{n_{2}}{\lambda-\alpha}\right)^{n-2p}\\
 &\cdot(1-\alpha)^{2p}\cdot l\left(\frac{\sqrt{\big(\lambda-\alpha n_{1}-\alpha r_{1}-(1-\alpha)^{2}\frac{n_{1}}{\lambda-\alpha}\big)\cdot\big(\lambda-\alpha n_{2}-\alpha r_{2}-(1-\alpha)^{2}\frac{n_{2}}{\lambda-\alpha}\big)}}{1-\alpha}\right),
\end{align*}
or
\begin{align*}
&f_{A_{\alpha}(G\tilde{o}\wedge^{n}_{i=1}H_{i})}(\lambda)
=(\lambda-\alpha)^{pn_{1}+(n-p)n_{2}}\cdot\left(\lambda-\alpha n_{1}-\alpha r_{1}-(1-\alpha)^{2}\frac{n_{1}}{\lambda-\alpha}\right)^{2p-n}\\
 &\cdot(1-\alpha)^{2(n-p)}\cdot h\left(\frac{\sqrt{\big(\lambda-\alpha n_{1}-\alpha r_{1}-(1-\alpha)^{2}\frac{n_{1}}{\lambda-\alpha}\big)\cdot\big(\lambda-\alpha n_{2}-\alpha r_{2}-(1-\alpha)^{2}\frac{n_{2}}{\lambda-\alpha}\big)}}{1-\alpha}\right).
\end{align*}
\end{proof}

Let $G$ be a semiregular bipartite graph and $H_1$, $H_2$, \ldots, $H_n$ be the regular or the complete bipartite graphs. Then combining Lemmas \ref{r-regular}, \ref{chi} with the above conclusions, it is easy to obtain the $A_{\alpha}$-characteristic polynomial of $G\tilde{o}\wedge^{n}_{i=1} H_{i}$, for any real $\alpha\in[0,1]$.
\begin{corollary}\label{ni}
Let $G$ be an $(r_1,r_2)$-semiregular bipartite graph and let $V(G)=(P, Q)$ with $|P|=p$ and $|Q|=n-p$. Let $H_{1} \cong H_2\cong \ldots \cong H_{p} \cong Z_{1}$ and $H_{p+1} \cong \ldots \cong H_{n} \simeq Z_{2}$. If $Z_{1}$ is an $r_{1}'$-regular graph with $|V(Z_{1})|=n_{1}$ and $Z_{2}$ is an $r_{2}'$-regular graph with $|V(Z_{2})|=n_{2}$, then for any real $\alpha\in[0,1]$, we see that
\begin{align*}
&f_{A_{\alpha}(G\tilde{o}\wedge^{n}_{i=1}H_{i})}(\lambda)\\
&=\big(f_{A_{\alpha}(Z_{1})}(\lambda-\alpha)\big)^p\cdot\big(f_{A_{\alpha}(Z_{2})}(\lambda-\alpha)\big)^{n-p}
\cdot\left(\lambda-\alpha n_{2}-\alpha r_{2}-(1-\alpha)^{2}\frac{n_{2}}{\lambda-\alpha-r_{2}'}\right)^{n-2p}\\
 &\cdot(1-\alpha)^{2p}\cdot l\left(\frac{\sqrt{\big(\lambda-\alpha n_{1}-\alpha r_{1}-(1-\alpha)^{2}\frac{n_{1}}{\lambda-\alpha-r_{1}'}\big)\cdot\big(\lambda-\alpha n_{2}-\alpha r_{2}-(1-\alpha)^{2}\frac{n_{2}}{\lambda-\alpha-r_{2}'}\big)}}{1-\alpha}\right),
\end{align*}
or
\begin{align*}
&f_{A_{\alpha}(G\tilde{o}\wedge^{n}_{i=1}H_{i})}(\lambda)\\
&=\big(f_{A_{\alpha}(Z_{1})}(\lambda-\alpha)\big)^p\cdot\big(f_{A_{\alpha}(Z_{2})}(\lambda-\alpha)\big)^{n-p}
\cdot\left(\lambda-\alpha n_{1}-\alpha r_{1}-(1-\alpha)^{2}\frac{n_{1}}{\lambda-\alpha-r_{1}'}\right)^{2p-n}\\
 &\cdot(1-\alpha)^{2(n-p)}\cdot h\left(\frac{\sqrt{\big(\lambda-\alpha n_{1}-\alpha r_{1}-(1-\alpha)^{2}\frac{n_{1}}{\lambda-\alpha-r_{1}'}\big)\cdot\big(\lambda-\alpha n_{2}-\alpha r_{2}-(1-\alpha)^{2}\frac{n_{2}}{\lambda-\alpha-r_{2}'}\big)}}{1-\alpha}\right).
\end{align*}
\end{corollary}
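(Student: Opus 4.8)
The plan is to specialize Theorem \ref{pq bipartite} to the situation in which the two attached graphs $Z_1$ and $Z_2$ are regular, so that their $A_{\alpha}$-coronals admit a closed form. First I would invoke Lemma \ref{r-regular}: since $Z_1$ is $r_1'$-regular of order $n_1$ and $Z_2$ is $r_2'$-regular of order $n_2$, we immediately obtain
$$
\chi_{A_{\alpha}(Z_1)}(\lambda)=\frac{n_1}{\lambda-\alpha-r_1'},\qquad
\chi_{A_{\alpha}(Z_2)}(\lambda)=\frac{n_2}{\lambda-\alpha-r_2'}.
$$

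Next, I would substitute these two expressions directly into both formulas supplied by Theorem \ref{pq bipartite}, replacing every occurrence of $\chi_{A_{\alpha}(Z_1)}(\lambda)$ by $n_1/(\lambda-\alpha-r_1')$ and every occurrence of $\chi_{A_{\alpha}(Z_2)}(\lambda)$ by $n_2/(\lambda-\alpha-r_2')$. Because $G$ is $(r_1,r_2)$-semiregular bipartite, Theorem \ref{pq bipartite} already delivers the factorization in terms of the polynomials $l(\cdot)$ and $h(\cdot)$ attached to the off-diagonal block $W$ of $A(G)$; the substitution only modifies the scalar diagonal entries and the quantities under the radical, so the two claimed identities follow at once. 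The two prefactors $\big(f_{A_{\alpha}(Z_1)}(\lambda-\alpha)\big)^p$ and $\big(f_{A_{\alpha}(Z_2)}(\lambda-\alpha)\big)^{n-p}$ are carried through unchanged (they are left unexpanded in the statement, exactly as for a general $Z_i$).

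This argument is the exact analogue of the proof of Corollary \ref{KS}, which is recovered here as the special case $r_1'=r_2'=0$, where $Z_1=\overline{K_{n_1}}$ and $Z_2=\overline{K_{n_2}}$ are empty graphs and the coronal denominators collapse to $\lambda-\alpha$. I do not anticipate any genuine obstacle: the result is a direct corollary of Theorem \ref{pq bipartite} together with Lemma \ref{r-regular}. The only point requiring care is bookkeeping, namely verifying that the semiregularity shifts $\alpha r_1$ and $\alpha r_2$ coming from $\alpha D(G)$ and the two coronal denominators $\lambda-\alpha-r_1'$ and $\lambda-\alpha-r_2'$ land in the correct slots of the $P$- and $Q$-blocks, so that the exponents $n-2p$ and $2p-n$ are paired with the $Z_2$- and $Z_1$-data, respectively, as dictated by Proposition \ref{Wmatrix}.
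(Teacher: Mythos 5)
Your proposal is correct and takes essentially the same route as the paper: the paper states this corollary as an immediate consequence of combining Theorem \ref{pq bipartite} with Lemma \ref{r-regular}, i.e.\ substituting $\chi_{A_{\alpha}(Z_1)}(\lambda)=\frac{n_1}{\lambda-\alpha-r_1'}$ and $\chi_{A_{\alpha}(Z_2)}(\lambda)=\frac{n_2}{\lambda-\alpha-r_2'}$ into both displayed formulas, exactly as you do. Your remark that Corollary \ref{KS} is recovered as the special case $r_1'=r_2'=0$ is also accurate and consistent with the paper's treatment.
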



\begin{corollary}\label{MS bipartite}
Let $G$ be an $(r_1,r_2)$-semiregular bipartite graph and let $V(G)=(P, Q)$ with $|P|=p$ and $|Q|=n-p$. Let $H_{1} \cong H_2\cong \ldots \cong H_{p} \cong K_{p_{1},q_{1}}$ and $H_{p+1} \cong \ldots \cong H_{n} \cong K_{p_{2},q_{2}}$. Then for any real $\alpha\in[0,1]$, we know that
\begin{align*}
f_{A_{\alpha}(G\tilde{o}\wedge^{n}_{i=1}H_{i})}(\lambda)
=\big(f_{A_{\alpha}(Z_{1})}(\lambda-\alpha)\big)^p\cdot\big(f_{A_{\alpha}(Z_{2})}(\lambda-\alpha)\big)^{n-p}
 \cdot b^{n-2p}\cdot(1-\alpha)^{2p}\cdot l\left(\frac{\sqrt{ab}}{1-\alpha}\right),
\end{align*}
or
\begin{align*}
f_{A_{\alpha}(G\tilde{o}\wedge^{n}_{i=1}H_{i})}(\lambda)
=\big(f_{A_{\alpha}(Z_{1})}(\lambda-\alpha)\big)^p\cdot\big(f_{A_{\alpha}(Z_{2})}(\lambda-\alpha)\big)^{n-p}
 \cdot a^{2p-n}\cdot(1-\alpha)^{2(n-p)}\cdot h\left(\frac{\sqrt{ab}}{1-\alpha}\right),
\end{align*}
where
$$a=\lambda-\alpha(p_{1}+q_{1})-\alpha r_{1} -(1-\alpha)^{2}\frac{(p_{1}+q_{1})\lambda-\alpha (p_{1}+q_{1})^{2}+2p_{1}q_{1}}{\lambda^{2}
 -\alpha (p_{1}+q_{1})\lambda+(2\alpha-1)p_{1}q_{1}},$$

$$b=\lambda-\alpha (p_{2}+q_{2})-\alpha r_{2} -(1-\alpha)^{2}\frac{(p_{2}+q_{2})\lambda-\alpha (p_{2}+q_{2})^{2}+2p_{2}q_{2}}{\lambda^{2}
 -\alpha (p_{2}+q_{2})\lambda+(2\alpha-1)p_{2}q_{2}}.$$
\end{corollary}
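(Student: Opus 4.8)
The plan is to obtain Corollary \ref{MS bipartite} as an immediate specialization of Theorem \ref{pq bipartite}, taking $Z_1 \cong K_{p_1,q_1}$ and $Z_2 \cong K_{p_2,q_2}$, and then replacing the two $A_{\alpha}$-coronals appearing there by their closed forms supplied by Lemma \ref{chi}. The point is that the hypotheses of Corollary \ref{MS bipartite} are exactly those of Theorem \ref{pq bipartite} with the regular (or arbitrary) factors $Z_1,Z_2$ instantiated as complete bipartite graphs, so no new structural computation is needed.

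First I would record the orders of the two complete bipartite factors, namely $n_1=|V(K_{p_1,q_1})|=p_1+q_1$ and $n_2=|V(K_{p_2,q_2})|=p_2+q_2$, since these are precisely the quantities $n_1,n_2$ that occur in Theorem \ref{pq bipartite}. Next I would invoke Lemma \ref{chi} with $(p,q)=(p_1,q_1)$ and $(p,q)=(p_2,q_2)$ to write
$$
\chi_{A_{\alpha}(K_{p_1,q_1})}(\lambda)=\frac{(p_1+q_1)\lambda-\alpha(p_1+q_1)^2+2p_1q_1}{\lambda^2-\alpha(p_1+q_1)\lambda+(2\alpha-1)p_1q_1},
$$
and the entirely analogous expression for $\chi_{A_{\alpha}(K_{p_2,q_2})}(\lambda)$ with the subscript $1$ replaced by $2$.

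Then I would substitute $n_1=p_1+q_1$, $n_2=p_2+q_2$ together with these two coronals directly into the two equivalent formulas furnished by Theorem \ref{pq bipartite}. Under this substitution the factor $\lambda-\alpha n_1-\alpha r_1-(1-\alpha)^2\chi_{A_{\alpha}(Z_1)}(\lambda)$ becomes verbatim the quantity $a$ defined in the statement, while $\lambda-\alpha n_2-\alpha r_2-(1-\alpha)^2\chi_{A_{\alpha}(Z_2)}(\lambda)$ becomes $b$; retaining the notation $Z_1,Z_2$ for the two complete bipartite factors leaves the prefactors $\big(f_{A_{\alpha}(Z_1)}(\lambda-\alpha)\big)^p$ and $\big(f_{A_{\alpha}(Z_2)}(\lambda-\alpha)\big)^{n-p}$ unchanged, and the argument of $l(\cdot)$ (resp. $h(\cdot)$) collapses to $\sqrt{ab}/(1-\alpha)$. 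Carrying the exponents $n-2p$, $2p-n$, $2p$, $2(n-p)$ across from Theorem \ref{pq bipartite} then yields the two claimed identities.

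Because every ingredient is already established, there is no genuine mathematical obstacle here; the corollary is a direct evaluation. The only care required is bookkeeping, i.e. checking that the abbreviations $a$ and $b$ coincide exactly with the substituted expressions and that the four exponents are transcribed correctly. I would therefore present the proof simply as ``apply Lemma \ref{chi} to compute $\chi_{A_{\alpha}(K_{p_i,q_i})}(\lambda)$, substitute into Theorem \ref{pq bipartite}, and abbreviate,'' mirroring the structure of the proof of Corollary \ref{KS}, where the same strategy is used with $\overline{K_{n_i}}$ in place of $K_{p_i,q_i}$.
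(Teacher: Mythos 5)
Your proposal is correct and follows exactly the route the paper intends: the paper itself states (just before Corollary \ref{ni}) that these corollaries follow by combining Lemma \ref{chi} with Theorem \ref{pq bipartite}, which is precisely your substitution of $n_1=p_1+q_1$, $n_2=p_2+q_2$ and the complete-bipartite coronals into the two formulas of that theorem. The bookkeeping you describe (identifying the substituted factors with $a$ and $b$ and carrying over the exponents) is all that is needed, just as in the proof of Corollary \ref{KS}.
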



\begin{corollary}
Let $G$ be an $(r_1,r_2)$-semiregular bipartite graph and let $V(G)=(P, Q)$ with $|P|=p$ and $|Q|=n-p$. Let $H_{1} \cong H_2\cong \ldots \cong H_{p} \cong Z_{1}$ and $H_{p+1} \cong \ldots \cong H_{n} \cong K_{p_{2},q_{2}}$. If $Z_{1}$ is an $r_{1}'$-regular graph with $|V(Z_{1})|=n_{1}$, then for any real $\alpha\in[0,1]$, we have
\begin{align*}
f_{A_{\alpha}(G\tilde{o}\wedge^{n}_{i=1}H_{i})}(\lambda)
=\big(f_{A_{\alpha}(Z_{1})}(\lambda-\alpha)\big)^p\cdot\big(f_{A_{\alpha}(Z_{2})}(\lambda-\alpha)\big)^{n-p}
\cdot b^{n-2p}\cdot(1-\alpha)^{2p}\cdot l\left(\frac{\sqrt{bc}}{1-\alpha}\right),
\end{align*}
or
\begin{align*}
f_{A_{\alpha}(G\tilde{o}\wedge^{n}_{i=1}H_{i})}(\lambda)
=\big(f_{A_{\alpha}(Z_{1})}(\lambda-\alpha)\big)^p\cdot\big(f_{A_{\alpha}(Z_{2})}(\lambda-\alpha)\big)^{n-p}
\cdot c^{2p-n}\cdot(1-\alpha)^{2(n-p)}\cdot h\left(\frac{\sqrt{bc}}{1-\alpha}\right),
\end{align*}
where
$$b=\lambda-\alpha (p_{2}+q_{2})-\alpha r_{2}-(1-\alpha)^{2}\frac{(p_{2}+q_{2})\lambda-\alpha (p_{2}+q_{2})^{2}+2p_{2}q_{2}}{\lambda^{2}-\alpha (p_{2}+q_{2})\lambda+(2\alpha-1)p_{2}q_{2}},$$

$$c=\lambda-\alpha n_{1}-\alpha r_{1}-(1-\alpha)^{2}\frac{n_{1}}{\lambda-\alpha-r_{1}'}.$$
\end{corollary}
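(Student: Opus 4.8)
The plan is to obtain this corollary as an immediate specialization of Theorem \ref{pq bipartite}, in which the graphs attached along the part $P$ are regular (as before) but the graphs attached along the part $Q$ are now taken to be complete bipartite. Since Theorem \ref{pq bipartite} already expresses $f_{A_{\alpha}(G\tilde{o}\wedge^{n}_{i=1}H_{i})}(\lambda)$ entirely in terms of the two $A_{\alpha}$-coronals $\chi_{A_{\alpha}(Z_1)}(\lambda)$ and $\chi_{A_{\alpha}(Z_2)}(\lambda)$, the whole argument reduces to evaluating these two coronals in the present setting and substituting them into the general formula.

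First I would record the coronal of $Z_1$. Because $Z_1$ is $r_1'$-regular of order $n_1$, Lemma \ref{r-regular} gives $\chi_{A_{\alpha}(Z_1)}(\lambda)=\frac{n_1}{\lambda-\alpha-r_1'}$, so that the quantity $\lambda-\alpha n_1-\alpha r_1-(1-\alpha)^2\chi_{A_{\alpha}(Z_1)}(\lambda)$ appearing throughout Theorem \ref{pq bipartite} collapses to exactly $c$. Next I would record the coronal of $Z_2\cong K_{p_2,q_2}$, whose order is $n_2=p_2+q_2$. By Lemma \ref{chi}, $\chi_{A_{\alpha}(K_{p_2,q_2})}(\lambda)=\frac{(p_2+q_2)\lambda-\alpha(p_2+q_2)^2+2p_2q_2}{\lambda^2-\alpha(p_2+q_2)\lambda+(2\alpha-1)p_2q_2}$, so that $\lambda-\alpha n_2-\alpha r_2-(1-\alpha)^2\chi_{A_{\alpha}(Z_2)}(\lambda)$ becomes exactly $b$ after using $n_2=p_2+q_2$.

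Finally I would substitute these two evaluations into the two displayed forms of Theorem \ref{pq bipartite} and match factors: the $Z_1$-factor carries the exponent $2p-n$ and the argument $c$, while the $Z_2$-factor carries the exponent $n-2p$ and the argument $b$, and the radicand $\big(\lambda-\alpha n_1-\alpha r_1-(1-\alpha)^2\chi_{A_{\alpha}(Z_1)}(\lambda)\big)\cdot\big(\lambda-\alpha n_2-\alpha r_2-(1-\alpha)^2\chi_{A_{\alpha}(Z_2)}(\lambda)\big)$ inside $l$ and $h$ collapses to $bc$. This produces the two stated expressions verbatim. There is no genuine analytic obstacle; the only point demanding care is bookkeeping — keeping $Z_1$ (the regular graph sitting over $P$) paired with $c$ and the exponent $2p-n$, and $K_{p_2,q_2}$ (sitting over $Q$) paired with $b$ and the exponent $n-2p$, and consistently identifying the order $n_2$ of $Z_2$ with $p_2+q_2$ when simplifying the complete bipartite coronal.
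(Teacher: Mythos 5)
Your proposal is correct and follows exactly the route the paper intends: the paper states this corollary without proof as an immediate consequence of Theorem \ref{pq bipartite} combined with Lemmas \ref{r-regular} and \ref{chi}, which is precisely your substitution of $\chi_{A_{\alpha}(Z_1)}(\lambda)=\frac{n_1}{\lambda-\alpha-r_1'}$ and $\chi_{A_{\alpha}(K_{p_2,q_2})}(\lambda)$ into the two displayed forms, identifying $n_2=p_2+q_2$ so that the $P$-side factor becomes $c$ and the $Q$-side factor becomes $b$. Your bookkeeping of exponents ($c$ with $2p-n$, $b$ with $n-2p$) and of the radicand $bc$ matches the paper's statement verbatim.
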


\section{The $A_{\alpha}$-characteristic polynomial of $G\left[H_i\right]_1^m$}
In this section, we will focus on the $A_{\alpha}$-characteristic polynomial of the generalized edge corona graph and give some applications of related results. Before proving our main results, we first give some notation which will be useful later.

Let $G$ be a graph with $n$ vertices and $m$ edges. Then the vertex-edge {\it incidence matrix} $R(G)=(r_{ij})$ of $G$ is an $n\times m$ matrix with entry $r_{ij}=1$ if the vertex $v_{i}$ is incident to the edge $e_{j}$ and $0$ otherwise. Let $A=(a_{ij})$ and $B$ be two matrices of order $m\times n$ and $p\times q$, respectively. The {\it Kronecker product} of $A$ and $B$, denoted $A\otimes B$, is the $mp\times nq$ block matrix $(a_{ij}B)$. It can be verified from definition that
$$(A\otimes B)(C\otimes D)=AC\otimes BD.$$

Let $G$ be an $r_{1}$-regular graph with $n$ vertices and $m$ edges, and $H_{1}, H_{2}, ...,H_{m}$ be $m$ $r_{2}$-regular graphs with $q$ vertices. By Definition \ref{GEC}, the vertices of $G[H_i]_1^m$ are partitioned by $V(G)\cup V(H_1)\cup V(H_2)\cup \cdots \cup V(H_m)$. Then the adjacency matrix and degree matrix of $G[H_i]_1^m$ are given as follows, respectively.
\begin{equation}\label{bianA}
A(G[H_i]_1^m):=\begin{pmatrix}
A(G) & R\otimes \mathbf{1}_{q}^{T} \\
 R^{T}\otimes \mathbf{1}_{q} & F
\end{pmatrix},
\end{equation}
where
\[ F=\begin{array}{c}
V(H_1) \\
V(H_2) \\
\vdots \\
V(H_{m-1}) \\
V(H_m)
\end{array} \bordermatrix{
       &V(H_{1})&V(H_{2})&\cdots&V(H_{m-1})&V(H_{m})\cr
 &A(H_{1}) & 0 & 0 & \cdots & 0 \cr
 &0 & A(H_{2}) & 0 & \cdots & 0 \cr
 &\vdots & \vdots & \ddots & \vdots & \vdots \cr
&0 & 0 & \cdots & A(H_{m-1}) & 0 \cr
 &0 & 0 & \cdots & 0 & A(H_{m}) \cr} . \]

\begin{small}\begin{equation}\label{bianD}
D(G[H_i]_1^m)=\begin{array}{c}
V(G)\\
V(H_1) \\
V(H_2) \\
\vdots \\
V(H_m)
\end{array} \bordermatrix{
  &V(G) &V(H_{1})&V(H_{2})&\cdots&V(H_{m})\cr
  &D(A(G))+r_{1}qI_{n}&0&0&\cdots&0\cr
  &0          &D(H_{1})+2I_{q} & 0 & \cdots & 0 \cr
  &0          &0 &D(H_{2})+2I_{q}  & \cdots& 0  \cr
  &\vdots     &\vdots & \vdots & \ddots & \vdots  \cr
  &0          &0 & 0 & \cdots &D(H_{m})+2I_{q}  \cr }.
\end{equation}
\end{small}

Using (\ref{A-alpha-matrix}), we know
\begin{equation}\label{bianAlpha}
A_{\alpha}(G[H_i]_1^m)=\alpha D(G[H_i]_1^m)+(1-\alpha)A(G[H_i]_1^m).
\end{equation}

Substituting (\ref{bianA}) and (\ref{bianD}) into (\ref{bianAlpha}), we obtain the $A_{\alpha}$-matrix of $G[H_i]_1^m$ below
\begin{equation}\label{bianAlpha-2}
\begin{aligned}
A_{\alpha}(G[H_i]_1^m)=\begin{pmatrix}
\alpha r_1 q I_n+A_{\alpha}(G) & (1-\alpha)R \otimes \mathbf{1}_{q}^T \\
(1-\alpha)R^T \otimes \mathbf{1}_{q} & 2\alpha I_{qm}+A_{\alpha}(H_{i}|_{1}^{m})
\end{pmatrix},
\end{aligned}
\end{equation}
where
$$A_{\alpha}(H_{i}|_{1}^{m})=
\left(
\begin{array}{ccccccccccccc}
& A_{\alpha}(H_1) & 0 &\cdots & 0 \\
& 0 & A_{\alpha}(H_2)& \cdots & 0 \\
& 0 & 0 & \cdots & 0 \\
&0 & 0 & \cdots &A_{\alpha}(H_m)
\end{array}
\right).
$$

Therefore, assume that $G$ is an $r_{1}$-regular graph with $n$ vertices and $m$ edges, and $H_{1}, H_{2}, ...,H_{m}$ are $r_{2}$-regular graphs with $q$ vertices. According to the notations above, we set
$$W(\lambda)=R\otimes \mathbf{1}_{q}^{T}\big((\lambda-2\alpha)I_{qm}-A_{\alpha}(H_{i}|_{1}^{m})\big)^{-1}R^{T}\otimes \mathbf{1}_{q}.$$

\begin{proposition}\label{bianguan}
Let $G$ be an $r_1$-regular graph with $n$ vertices and $m$ edges, $H_1, H_2, \ldots, H_m$ be $r_2$-regular graphs, each of them has $q$ vertices. Then
$$W(\lambda)=\frac{q}{\lambda-2\alpha-r_2}\big(A(G)+r_1 I_n\big).$$
\end{proposition}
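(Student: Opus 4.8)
The plan is to exploit two structural facts in tandem: the block-diagonal form of the inner inverse together with the common $r_2$-regularity of the $H_i$, and the mixed-product rule $(A\otimes B)(C\otimes D)=AC\otimes BD$ recorded earlier in this section. The heart of the computation is to recognize that $\mathbf{1}_q$ is a common eigenvector of every block, with a single shared eigenvalue, so that the whole inverse acts as a scalar on the relevant Kronecker subspace.

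First I would note that, since $A_{\alpha}(H_i|_1^m)$ is block diagonal with diagonal blocks $A_{\alpha}(H_1),\dots,A_{\alpha}(H_m)$, the matrix $M:=\big((\lambda-2\alpha)I_{qm}-A_{\alpha}(H_i|_1^m)\big)^{-1}$ is again block diagonal, its $e$-th diagonal block being $M_e:=\big((\lambda-2\alpha)I_q-A_{\alpha}(H_e)\big)^{-1}$. Because $H_e$ is $r_2$-regular of order $q$, one has $A(H_e)\mathbf{1}_q=r_2\mathbf{1}_q$ and hence $A_{\alpha}(H_e)\mathbf{1}_q=\big(\alpha r_2+(1-\alpha)r_2\big)\mathbf{1}_q=r_2\mathbf{1}_q$; by the same short computation as in Lemma \ref{r-regular} this yields $M_e\mathbf{1}_q=\frac{1}{\lambda-2\alpha-r_2}\mathbf{1}_q$ for every $e$. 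The crucial point is that this scalar is identical across all blocks, which is exactly where the shared regularity of the $H_i$ is used.

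Next I would transfer this block-wise action to the full Kronecker structure. Reading the $w$-th column of $R^{T}\otimes\mathbf{1}_q$ as the stack of the vectors $R_{w,e}\mathbf{1}_q$ over the edges $e$, and applying the block-diagonal $M$ block by block, the previous step gives the invariant-subspace identity
\[
M\,(R^{T}\otimes\mathbf{1}_q)=\frac{1}{\lambda-2\alpha-r_2}\,(R^{T}\otimes\mathbf{1}_q).
\]
Substituting this into the definition of $W(\lambda)$ collapses the inverse, and the mixed-product rule then gives
\[
W(\lambda)=\frac{1}{\lambda-2\alpha-r_2}\,(R\otimes\mathbf{1}_q^{T})(R^{T}\otimes\mathbf{1}_q)=\frac{1}{\lambda-2\alpha-r_2}\,(RR^{T})\otimes(\mathbf{1}_q^{T}\mathbf{1}_q)=\frac{q}{\lambda-2\alpha-r_2}\,RR^{T}.
\]

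Finally I would invoke the standard incidence-matrix identity $RR^{T}=D(G)+A(G)$, which for the $r_1$-regular graph $G$ reduces to $RR^{T}=A(G)+r_1 I_n$; inserting this yields precisely $W(\lambda)=\frac{q}{\lambda-2\alpha-r_2}\big(A(G)+r_1 I_n\big)$. I expect the main obstacle to be the transfer step of the third paragraph, namely verifying the invariant-subspace identity for $M$, since it is there that the block-diagonal inverse must be threaded cleanly through the Kronecker factors and the hypothesis of a common regular eigenvalue $r_2$ is genuinely needed; everything downstream is the routine mixed-product computation and the incidence identity.
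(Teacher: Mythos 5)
Your proposal is correct and follows essentially the same route as the paper: both arguments show that $R^{T}\otimes\mathbf{1}_q$ is fixed (up to the scalar $\lambda-2\alpha-r_2$) by the block matrix $(\lambda-2\alpha)I_{qm}-A_{\alpha}(H_i|_1^m)$ via the $r_2$-regularity of the $H_i$, invert to collapse $W(\lambda)$ to $\frac{1}{\lambda-2\alpha-r_2}(R\otimes\mathbf{1}_q^{T})(R^{T}\otimes\mathbf{1}_q)$, and finish with the mixed-product rule and the identity $RR^{T}=A(G)+D(G)=A(G)+r_1I_n$. Your block-by-block verification of the invariant-subspace identity merely spells out in more detail what the paper asserts in one line from the row-sum property, so the two proofs are the same in substance.
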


\begin{proof}
Since each row sum of $A_{\alpha}(H_{i}|_{1}^{m})$ equals $r_2$,  $A_{\alpha}(H_{i}|_{1}^{m})(R^T \otimes \mathbf{1}_{q})=r_2(R^T \otimes \mathbf{1}_{q})$. Hence
$$
\big((\lambda- 2\alpha) I_{q m}-A_{\alpha}(H_{i}|_{1}^{m})\big)(R^T \otimes \mathbf{1}_{q})=(\lambda-2\alpha-r_2)(R^T \otimes \mathbf{1}_{q}).
$$
Therefore,
\begin{equation*}
(R\otimes \mathbf{1}_{q}^{T})\big((\lambda-2\alpha)I_{qm}-A_{\alpha}(H_{i}|_{1}^{m})\big)^{-1}(R^{T}\otimes \mathbf{1}_{q})=\frac{(R \otimes \mathbf{1}_{q}^T) \cdot(R^T \otimes \mathbf{1}_{q})}{\lambda-2\alpha-r_2}=\frac{q}{\lambda-2\alpha-r_2}\big(A(G)+r_1 I_n\big),
\end{equation*}
as required.
\end{proof}

We are now in a position to prove the next result, which gives the expression of the $A_{\alpha}$-characteristic polynomial of $G[H_i]_1^m$.

\begin{theorem}\label{biantheorem}
Let $G$ be an $r_{1}$-regular graph with $n$ vertices and $m$ edges, $H_1$, $H_2, \ldots, H_m$ be $r_2$-regular graphs, each of them has $q$ vertices. Then for any real $\alpha\in[0,1]$, we get the $A_{\alpha}$-characteristic polynomial of $G[H_i]_1^m$ as follows:
$$
\begin{aligned}
f_{A_{\alpha}(G[H_i]_1^m)}(\lambda)
=&\frac{1}{(\lambda-2\alpha-r_{2})^n} \prod_{i=1}^m f_{A_{\alpha}(H_i)}(\lambda-2\alpha)\cdot\prod_{j=1}^n\big(\lambda^{2}-\big(2\alpha+r_{2}+\alpha r_{1}(q+1)+(1-\alpha)\lambda_{j}\big)\lambda\\
&+(2\alpha+r_{2})\big(\alpha r_{1}(q+1)+(1-\alpha)\lambda_{j}\big)-q(1-\alpha)^{2}(r_{1}+\lambda_{j})\big),
\end{aligned}
$$
where $\lambda_{1} \leq \lambda_{2} \leq \cdots \leq \lambda_{n}=r_{1}$ are the $A$-eigenvalues of $G$.
\end{theorem}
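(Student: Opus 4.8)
The plan is to expand $f_{A_{\alpha}(G[H_i]_1^m)}(\lambda)=\det\big(\lambda I-A_{\alpha}(G[H_i]_1^m)\big)$ directly from the block form recorded in (\ref{bianAlpha-2}) and to collapse it, via a Schur complement, to a determinant over the $n$-dimensional vertex space of $G$. First I would write
$$f_{A_{\alpha}(G[H_i]_1^m)}(\lambda)=\det\begin{pmatrix} (\lambda-\alpha r_1 q)I_n-A_{\alpha}(G) & -(1-\alpha)R\otimes\mathbf{1}_q^T \\ -(1-\alpha)R^T\otimes\mathbf{1}_q & (\lambda-2\alpha)I_{qm}-A_{\alpha}(H_i|_1^m) \end{pmatrix},$$
and apply Lemma \ref{Schur} pivoting on the lower-right block. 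Its determinant is $\prod_{i=1}^m f_{A_{\alpha}(H_i)}(\lambda-2\alpha)$ because $A_{\alpha}(H_i|_1^m)$ is block diagonal, and the complementary factor is
$$\det\Big((\lambda-\alpha r_1 q)I_n-A_{\alpha}(G)-(1-\alpha)^2 W(\lambda)\Big),$$
where $W(\lambda)$ is precisely the matrix defined just before Proposition \ref{bianguan}, since the off-diagonal blocks carry the factors $(1-\alpha)$. Strictly the pivot is invertible only for $\lambda$ outside the spectrum of $A_{\alpha}(H_i|_1^m)$, but as both sides are polynomials the identity extends to all $\lambda$.

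Next I would substitute $W(\lambda)=\tfrac{q}{\lambda-2\alpha-r_2}\big(A(G)+r_1 I_n\big)$ from Proposition \ref{bianguan}, and use that $G$ is $r_1$-regular, so $A_{\alpha}(G)=\alpha r_1 I_n+(1-\alpha)A(G)$. Collecting the $I_n$ and $A(G)$ terms, the remaining determinant becomes
$$\det\left(\big(\lambda-\alpha r_1(q+1)\big)I_n-(1-\alpha)A(G)-(1-\alpha)^2\frac{q}{\lambda-2\alpha-r_2}\big(A(G)+r_1I_n\big)\right),$$
which is a polynomial expression in the single matrix $A(G)$.

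Since $A(G)$ is real symmetric with eigenvalues $\lambda_1\le\cdots\le\lambda_n=r_1$, this determinant factors as the product over $j$ obtained by replacing $A(G)$ by $\lambda_j$. Finally I would clear the denominator: multiplying the $j$-th scalar factor by $(\lambda-2\alpha-r_2)$ turns it into the quadratic
$$(\lambda-2\alpha-r_2)\big(\lambda-\alpha r_1(q+1)-(1-\alpha)\lambda_j\big)-q(1-\alpha)^2(r_1+\lambda_j),$$
and expanding reproduces the quadratic in the statement, while the $n$ cleared denominators assemble into the prefactor $\tfrac{1}{(\lambda-2\alpha-r_2)^n}$.

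The computation is essentially routine once the structure is set up; I expect the main obstacle to be the step of recognizing that, after applying Proposition \ref{bianguan}, the Schur complement is a polynomial in $A(G)$ alone. This is what legitimizes the product-over-eigenvalues factorization, and it is exactly where both hypotheses are indispensable: the $r_1$-regularity of $G$ forces $A_{\alpha}(G)$ to be a linear function of $A(G)$, and the $r_2$-regularity of the $H_i$ is what makes $W(\lambda)$ a scalar multiple of $A(G)+r_1 I_n$ in Proposition \ref{bianguan}. Without regularity the middle term would not commute with $A_{\alpha}(G)$, and the clean factorization into $n$ quadratics would fail.
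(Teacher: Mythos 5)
Your proposal is correct and follows essentially the same route as the paper's own proof: a Schur complement pivoting on the block $(\lambda-2\alpha)I_{qm}-A_{\alpha}(H_i|_1^m)$, substitution of $W(\lambda)$ from Proposition \ref{bianguan}, use of $r_1$-regularity to write $A_{\alpha}(G)=\alpha r_1 I_n+(1-\alpha)A(G)$, factorization over the eigenvalues of $A(G)$, and clearing the $n$ denominators $(\lambda-2\alpha-r_2)$. Your added remark that the Schur-complement identity first holds for $\lambda$ off the spectrum of $A_{\alpha}(H_i|_1^m)$ and then extends by polynomial continuation is a small rigor point the paper leaves implicit, but it does not change the argument.
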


\begin{proof}
According to the $A_{\alpha}$-matrix of $G[H_i]_1^m$ from (\ref{bianAlpha-2}), it follows that
$$
\begin{aligned}
f_{A_{\alpha}(G[H_i]_1^m)}(\lambda)&=\operatorname{det}\big(\lambda I-A_{\alpha}(G[H_i]_1^m)\big)\\
&=\operatorname{det}\left(\begin{array}{cc}
\lambda I_n-\alpha r_1 q I_n-A_{\alpha}(G) & -(1-\alpha)R \otimes \mathbf{1}_{q}^T \\
-(1-\alpha)R^T \otimes \mathbf{1}_{q} & \lambda I_{q m}-2\alpha I_{qm}-A_{\alpha}(H_{i}|_{1}^{m})
\end{array}\right) .
\end{aligned}
$$
By Lemma \ref{Schur}, one may obtain that
$$
\begin{aligned}
&f_{A_{\alpha}(G[H_i]_{1}^{m})}(\lambda)  \\
= & \operatorname{det}\left(\begin{array}{llll}
(\lambda-2\alpha) I_{q}-A_{\alpha}(H_1) & & & \\
  & (\lambda-2\alpha) I_{q}-A_{\alpha}(H_2) & & \\
  & & \ddots&  \\
  & & & (\lambda-2\alpha) I_{q}-A_{\alpha}(H_m)
\end{array}\right) \\
& \cdot \operatorname{det}\left(\lambda I_{n}-\alpha r_{1} q  I_{n}-A_{\alpha}(G)-(1-\alpha)^{2}(R\otimes \mathbf{1}_{q}^{T})\big((\lambda-2\alpha)I_{qm}-A_{\alpha}(H_{i}|_{1}^{m})\big)^{-1}(R^{T}\otimes \mathbf{1}_{q})\right)\\
=& \prod_{i=1}^m f_{A_{\alpha}(H_i)}(\lambda-2\alpha) \\
& \cdot \operatorname{det}\left(\lambda I_{n}-\alpha r_{1} q  I_{n}-A_{\alpha}(G)-(1-\alpha)^{2}(R\otimes \mathbf{1}_{q}^{T})\big((\lambda-2\alpha)I_{qm}-A_{\alpha}(H_{i}|_{1}^{m})\big)^{-1}(R^{T}\otimes \mathbf{1}_{q})\right).
\end{aligned}
$$

Therefore, as $G$ is an $r_1$ regular graph and by Proposition \ref{bianguan}, we obtain that
$$
\begin{small}
\begin{aligned}
&f_{A_{\alpha}(G[H_i]_{1}^{m})}(\lambda)  \\
=& \prod_{i=1}^m f_{A_{\alpha}(H_i)}(\lambda-2\alpha) \cdot \operatorname{det}\left(\lambda I_{n}-\alpha r_{1}qI_{n}-A_{\alpha}(G)-(1-\alpha)^{2}\frac{q}{\lambda-2 \alpha-r_{2}}\big(A(G)+r_{1}I_{n}\big)\right)\\
=& \prod_{i=1}^m f_{A_{\alpha}(H_i)}(\lambda-2\alpha) \cdot \operatorname{det}\left(\lambda I_{n}-\alpha r_{1}qI_{n}-\alpha r_{1}I_{n}-(1-\alpha)A(G)-\frac{(1-\alpha)^{2}q}{\lambda-2 \alpha-r_{2}}\big(A(G)+r_{1}I_{n}\big)\right)\\
=&\prod_{i=1}^m f_{A_{\alpha}(H_i)}(\lambda-2\alpha) \cdot \operatorname{det}\left(\lambda I_{n}-\alpha r_{1}(q+1)I_{n}-\frac{qr_{1}(1-\alpha)^{2}}{\lambda-2\alpha-r_{2}}I_{n}-
\frac{(1-\alpha)\big(\lambda-2\alpha-r_{2}+(1-\alpha)q\big)}{\lambda-2\alpha-r_{2}}A(G)\right)\\
= & \prod_{i=1}^m f_{A_{\alpha}(H_i)}(\lambda-2\alpha) \cdot \prod_{j=1}^n\left(\lambda-\alpha r_{1}(q+1)-\frac{qr_{1}(1-\alpha)^{2}}{\lambda-2\alpha-r_{2}}-
\frac{\lambda_{j}(1-\alpha)\big(\lambda-2\alpha-r_{2}+(1-\alpha)q\big)}{\lambda-2\alpha-r_{2}}\right) \\
= & \frac{1}{(\lambda-2\alpha-r_{2})^n} \prod_{i=1}^m f_{A_{\alpha}(H_i)}(\lambda-2\alpha)\cdot\prod_{j=1}^n\big(\lambda^{2}-\big(2\alpha+r_{2}+\alpha r_{1}(q+1)+(1-\alpha)\lambda_{j}\big)\lambda\\
&+(2\alpha+r_{2})\big(\alpha r_{1}(q+1)+(1-\alpha)\lambda_{j}\big)-q(1-\alpha)^{2}(r_{1}+\lambda_{j})\big).
\end{aligned}
\end{small}
$$
This completes the proof.
\end{proof}

Assume that the $A$-eigenvalues of $G$ are $\lambda_1 \leq \lambda_2 \leq$ $\cdots \leq \lambda_n=r_1$. For $j=1,2, \ldots, n$, set $b=2\alpha+r_{2}+\alpha r_{1}(q+1)+(1-\alpha)\lambda_{j}$ and
$$
\gamma_j, \bar{\gamma}_j=\frac{b \pm \sqrt{b^2-4\big((2\alpha+r_{2})\big(\alpha r_{1}(q+1)+(1-\alpha)\lambda_{j}\big)-q(1-\alpha)^{2}(r_{1}+\lambda_{j})\big)}}{2}.
$$

Then the following corollary is immediate from the Theorem \ref{biantheorem}.

\begin{corollary} Let $G$ be an $r_1$-regular connected graph with $n$ vertices and $m$ edges, and $H_1, H_2, \ldots, H_m$ be $m$ $r_2$-regular graphs with $q$ vertices. Suppose that the $A_{\alpha}$-eigenvalues of $H_i$ are $\mu_1^{(i)} \leq \mu_2^{(i)} \leq \cdots \leq$ $\mu_{q}^{(i)}=r_2$ for $i=1,2, \ldots, m$. Then $A_{\alpha}$-spectrum of $G[H_i]_1^m$ is
$$\left(
\begin{array}{ccccccccccccc}
2\alpha+r_{2} & \mu_{1}^{(1)}+2\alpha & \cdots & \mu_{q-1}^{(1)}+2\alpha & \cdots &  \mu_1^{(m)}+2\alpha & \cdots & \mu_{q-1}^{(m)}+2\alpha & \gamma_1 & \bar{\gamma}_1 & \cdots & \gamma_n & \bar{\gamma}_n\\
m-n & 1 & \cdots & 1 & \cdots & 1 & \cdots & 1 & 1 & 1 & \cdots & 1 & 1
\end{array} \right),
$$
where entries in the first row are the eigenvalues with the number of repetitions written below, respectively.
\end{corollary}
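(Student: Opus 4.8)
The plan is to read the $A_{\alpha}$-spectrum straight off the factored characteristic polynomial produced by Theorem \ref{biantheorem}, matching its three factors to the three kinds of entries in the claimed table. Write that formula as
$$f_{A_{\alpha}(G[H_i]_1^m)}(\lambda)=\frac{1}{(\lambda-2\alpha-r_{2})^n}\prod_{i=1}^m f_{A_{\alpha}(H_i)}(\lambda-2\alpha)\cdot\prod_{j=1}^n\bigl(\lambda^{2}-b\,\lambda+c_j\bigr),$$
where $b=2\alpha+r_{2}+\alpha r_{1}(q+1)+(1-\alpha)\lambda_{j}$ and $c_j=(2\alpha+r_{2})\bigl(\alpha r_{1}(q+1)+(1-\alpha)\lambda_{j}\bigr)-q(1-\alpha)^{2}(r_{1}+\lambda_{j})$ are exactly the coefficients appearing in the theorem, and $\lambda_1\le\cdots\le\lambda_n=r_1$ are the $A$-eigenvalues of $G$.

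First I would factor the middle product. Since each $H_i$ is $r_2$-regular of order $q$, its $A_{\alpha}$-eigenvalues are $\mu_1^{(i)}\le\cdots\le\mu_q^{(i)}=r_2$, so $f_{A_{\alpha}(H_i)}(\lambda-2\alpha)=\prod_{k=1}^q\bigl(\lambda-2\alpha-\mu_k^{(i)}\bigr)$ and hence $\prod_{i=1}^m f_{A_{\alpha}(H_i)}(\lambda-2\alpha)=\prod_{i=1}^m\prod_{k=1}^q\bigl(\lambda-2\alpha-\mu_k^{(i)}\bigr)$. The essential step is to peel off the top eigenvalue of every copy: since $\mu_q^{(i)}=r_2$ for each $i$, these $m$ factors combine to $(\lambda-2\alpha-r_2)^m$, which I cancel against the denominator $(\lambda-2\alpha-r_2)^n$ to leave $(\lambda-2\alpha-r_2)^{m-n}$. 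This produces the eigenvalue $2\alpha+r_2$ with multiplicity $m-n$, while the untouched factors $\prod_{i=1}^m\prod_{k=1}^{q-1}\bigl(\lambda-2\alpha-\mu_k^{(i)}\bigr)$ deliver the simple eigenvalues $\mu_k^{(i)}+2\alpha$ for $1\le k\le q-1$ and $1\le i\le m$.

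Next I would handle the last product. For each $j$ the quadratic $\lambda^2-b\,\lambda+c_j$ has roots precisely $\gamma_j$ and $\bar\gamma_j$, by the definition given just before the corollary, so $\prod_{j=1}^n\bigl(\lambda^2-b\,\lambda+c_j\bigr)=\prod_{j=1}^n(\lambda-\gamma_j)(\lambda-\bar\gamma_j)$ contributes the $2n$ simple eigenvalues $\gamma_j,\bar\gamma_j$. Collecting the three batches reproduces the table, and a dimension count closes the argument: $(m-n)+m(q-1)+2n=n+mq$, which is the order of $G[H_i]_1^m$.

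Since Theorem \ref{biantheorem} already does the heavy linear-algebra work, this is essentially an exercise in bookkeeping and I do not expect a genuine obstacle. The one point needing care is the cancellation: I must confirm that the factor $(\lambda-2\alpha-r_2)$ enters the numerator with multiplicity exactly $m$, one contribution from the eigenvalue $r_2$ of each copy, so that after dividing by $(\lambda-2\alpha-r_2)^n$ the surviving exponent is genuinely $m-n$ and records the multiplicity of $2\alpha+r_2$ in the spectrum rather than being silently merged with a root of one of the quadratic factors.
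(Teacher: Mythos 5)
Your proposal is correct and is exactly the argument the paper has in mind: the paper states this corollary as ``immediate from Theorem \ref{biantheorem}'' with no written proof, and your bookkeeping—splitting $\prod_i f_{A_{\alpha}(H_i)}(\lambda-2\alpha)$ into the $m$ factors $(\lambda-2\alpha-r_2)$ that cancel against $(\lambda-2\alpha-r_2)^{n}$ plus the factors $(\lambda-2\alpha-\mu_k^{(i)})$ for $k\le q-1$, and identifying $\gamma_j,\bar\gamma_j$ as the roots of the quadratics—is precisely the omitted verification, confirmed by your dimension count $(m-n)+m(q-1)+2n=n+mq$. Your closing worry resolves itself because the corollary describes the spectrum as a multiset: a quadratic factor can vanish at $\lambda=2\alpha+r_2$ only when $\alpha=1$ or $\lambda_j=-r_1$ (i.e. $G$ bipartite), and in that case the coincident root is simply listed among the $\gamma_j,\bar\gamma_j$, so the table remains valid.
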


\begin{corollary}
Let $G_{1}$ and $G_{2}$ be two $A$-cospectral $r_{1}$-regular graphs
with $n$ vertices and $m$ edges, and $H_{1}, \ldots, H_{m}$ be $r_{2}$-regular graphs and each of them have the same order $q$. Then for any real $\alpha\in[0,1]$, $G_{1}[H_i]_{1}^{m}$ and $G_{2}[H_i]_{1}^{m}$ are $A_{\alpha}$-cospectral.
\end{corollary}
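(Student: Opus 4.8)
The plan is to apply Theorem \ref{biantheorem} directly and observe that the resulting $A_{\alpha}$-characteristic polynomial depends only on data shared by $G_1$ and $G_2$. Recall that two graphs are $A_{\alpha}$-cospectral precisely when they have the same $A_{\alpha}$-characteristic polynomial, so it suffices to verify that $f_{A_{\alpha}(G_1[H_i]_1^m)}(\lambda)=f_{A_{\alpha}(G_2[H_i]_1^m)}(\lambda)$ for every $\alpha\in[0,1]$.

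First I would record that both $G_1$ and $G_2$ are $r_1$-regular on $n$ vertices with $m$ edges, and that the \emph{same} family of $r_2$-regular graphs $H_1,\ldots,H_m$, each of order $q$, is attached in the two constructions. Consequently the parameters $r_1, r_2, q, n, m$ entering the formula of Theorem \ref{biantheorem} are identical for the two edge coronas, and so are the prefactor $(\lambda-2\alpha-r_2)^{-n}$ and the product $\prod_{i=1}^m f_{A_{\alpha}(H_i)}(\lambda-2\alpha)$, since these involve only the petals $H_i$ and not the base graph.

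Next I would inspect the remaining factor $\prod_{j=1}^n\big(\lambda^{2}-(2\alpha+r_{2}+\alpha r_{1}(q+1)+(1-\alpha)\lambda_{j})\lambda+(2\alpha+r_{2})(\alpha r_{1}(q+1)+(1-\alpha)\lambda_{j})-q(1-\alpha)^{2}(r_{1}+\lambda_{j})\big)$. Apart from the shared parameters above, this product is built entirely from the adjacency eigenvalues $\lambda_1\le\cdots\le\lambda_n=r_1$ of the base graph. Since $G_1$ and $G_2$ are assumed $A$-cospectral, they carry exactly the same multiset of $A$-eigenvalues, so this product evaluates identically whether one uses $G_1$ or $G_2$. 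Combining the two observations, every factor in the formula of Theorem \ref{biantheorem} coincides for $G_1[H_i]_1^m$ and $G_2[H_i]_1^m$, whence their $A_{\alpha}$-characteristic polynomials are equal and the graphs are $A_{\alpha}$-cospectral.

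There is no genuine obstacle here, as the essential work was already done in Theorem \ref{biantheorem}, which expresses the $A_{\alpha}$-spectrum of a regular generalized edge corona solely through the ordinary $A$-spectrum of the base graph and the fixed $A_{\alpha}$-data of the regular petals. The only point meriting a word of care is that the hypothesis is $A$-cospectrality (not $A_{\alpha}$-cospectrality) of $G_1$ and $G_2$: since the formula involves the base graph only via its adjacency eigenvalues $\lambda_j$, $A$-cospectrality is exactly the condition needed, and no stronger assumption on $G_1, G_2$ is required.
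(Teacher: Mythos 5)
Your proposal is correct and follows exactly the route the paper intends: the corollary is stated as an immediate consequence of Theorem \ref{biantheorem}, whose formula for $f_{A_{\alpha}(G[H_i]_1^m)}(\lambda)$ depends on the base graph only through the shared parameters $r_1, n, m, q, r_2$ and the multiset of $A$-eigenvalues $\lambda_1,\ldots,\lambda_n$, which coincide for the $A$-cospectral graphs $G_1$ and $G_2$. Your closing remark that $A$-cospectrality (rather than $A_{\alpha}$-cospectrality) is precisely the right hypothesis is also the correct reading of the statement.
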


\begin{corollary}
Let $G$ be an $r_{1}$-regular graph with $n$ vertices and $m$ edges, and $H_{1}, \ldots, H_{m}$, $F_{1}, \ldots, F_{m}$ be $2m$ $r_{2}$-regular graphs each of them with same order $q$. Suppose that $H_{i}$ and $F_{i}$ are $m$ pairs of $A_{\alpha}$-cospectral graphs for $i=1,2, \ldots, m$. Then for any real $\alpha\in[0,1]$, $G[H_i]_1^m$ and $G[F_{i}]_{1}^{m}$ are $A_{\alpha}$-cospectral graphs.
\end{corollary}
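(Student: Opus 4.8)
The plan is to read off both $A_\alpha$-characteristic polynomials directly from Theorem \ref{biantheorem} and to compare them factor by factor. Since $G$ is a fixed $r_1$-regular graph and, by hypothesis, every $H_i$ and every $F_i$ is an $r_2$-regular graph of the same order $q$, the hypotheses of Theorem \ref{biantheorem} are satisfied by both $G[H_i]_1^m$ and $G[F_i]_1^m$ with the very same parameters $r_1$, $r_2$, $q$, $n$, $m$ and the same list of $A$-eigenvalues $\lambda_1\le\cdots\le\lambda_n=r_1$ of $G$.

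First I would record the two expressions. Writing $P(\lambda)$ for the common ``outer'' factor
$$
P(\lambda)=\frac{1}{(\lambda-2\alpha-r_2)^n}\prod_{j=1}^n\Big(\lambda^2-\big(2\alpha+r_2+\alpha r_1(q+1)+(1-\alpha)\lambda_j\big)\lambda+(2\alpha+r_2)\big(\alpha r_1(q+1)+(1-\alpha)\lambda_j\big)-q(1-\alpha)^2(r_1+\lambda_j)\Big),
$$
Theorem \ref{biantheorem} yields
$$
f_{A_\alpha(G[H_i]_1^m)}(\lambda)=P(\lambda)\prod_{i=1}^m f_{A_\alpha(H_i)}(\lambda-2\alpha),\qquad f_{A_\alpha(G[F_i]_1^m)}(\lambda)=P(\lambda)\prod_{i=1}^m f_{A_\alpha(F_i)}(\lambda-2\alpha).
$$
The key observation is that $P(\lambda)$ depends only on $\alpha$, on the parameters $r_1$, $r_2$, $q$, $n$, and on the $A$-spectrum of $G$; it carries no information about the internal structure of the factor graphs. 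Hence the outer factor is literally identical for the two operations.

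Next I would invoke the cospectrality hypothesis. Because $H_i$ and $F_i$ are $A_\alpha$-cospectral, they share the same $A_\alpha$-characteristic polynomial, so $f_{A_\alpha(H_i)}(\lambda)=f_{A_\alpha(F_i)}(\lambda)$ for each $i$. Substituting $\lambda-2\alpha$ for $\lambda$ and multiplying over $i=1,\ldots,m$ gives $\prod_{i=1}^m f_{A_\alpha(H_i)}(\lambda-2\alpha)=\prod_{i=1}^m f_{A_\alpha(F_i)}(\lambda-2\alpha)$. Combining this with the equality of the outer factors forces $f_{A_\alpha(G[H_i]_1^m)}(\lambda)=f_{A_\alpha(G[F_i]_1^m)}(\lambda)$, which is precisely the assertion that the two graphs are $A_\alpha$-cospectral.

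There is no serious obstacle here: once Theorem \ref{biantheorem} is in hand the argument is pure bookkeeping. The only points that will require a moment's care are (i) checking that the hypotheses of Theorem \ref{biantheorem} hold simultaneously for both families, which is guaranteed because all $2m$ factor graphs are $r_2$-regular of order $q$, and (ii) making explicit that the product $\prod_{j=1}^n(\cdots)$ is governed solely by the fixed data of $G$ together with the common parameters, so that cospectrality of the factor graphs alone suffices to make the two full polynomials agree.
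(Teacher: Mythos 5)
Your proposal is correct and is essentially the paper's own argument: the paper states this corollary without proof as an immediate consequence of Theorem \ref{biantheorem}, the point being exactly that the factor $\frac{1}{(\lambda-2\alpha-r_2)^n}\prod_{j=1}^n(\cdots)$ depends only on $\alpha$, $r_1$, $r_2$, $q$, $n$ and the $A$-spectrum of $G$, while the hypothesis $f_{A_\alpha(H_i)}=f_{A_\alpha(F_i)}$ makes the remaining factors $\prod_{i=1}^m f_{A_\alpha(H_i)}(\lambda-2\alpha)$ and $\prod_{i=1}^m f_{A_\alpha(F_i)}(\lambda-2\alpha)$ coincide. Your bookkeeping (both families satisfying the theorem's hypotheses with the same parameters, and cospectrality being equivalent to equality of characteristic polynomials for these real symmetric matrices) is exactly what the paper leaves implicit.
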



\begin{thebibliography}{99}

\bibitem{da} R. B. Bapat, Graphs and matrices, Springer, London, (2010).

\bibitem{cve} S. Barik, S. Pati, B. K. Sarma, The spectrum of the corona of two graphs, SIAM J. Discrete Math. 21 (1) (2007) 47–56.

\bibitem{cs} L. Brankovi\'{c}, D. Cvetkovi\'{c}, The eigenspace of the eigenvalue -2 in generalized line graphs and a problem in security of statistical data bases, Univ. Beograd. Publ. Elektrotehn. Fak. Ser. Mat. 14 (2003) 37–48.

\bibitem{ai} D. M. Cardoso, G. Past\'{e}n, O. Rojo, Graphs with clusters perturbed by regular graphs $A_{\alpha}$-spectrum and applications, Discuss. Math. Graph Theory. 40 (2) (2020) 451-466.

\bibitem{bs} F. Chung, L. Lu, Complex graphs and networks, American Mathematical Society, Providence, Rhode Island. (2006).

\bibitem{cv} S. Y. Cui, G. X. Tian, The spectrum and the signless laplacian spectrum of coronae, Linear Algebra Appl. 437 (7) (2012) 1692–1703.

\bibitem{bf} D. Cvetkovi\'{c}, T. Davidovi\'{c}, Applications of some graph invariants to the analysis of multiprocessor interconnection networks, YUJOR 18 (2) (2008) 173–186.

\bibitem{bt} D. Cvetkovi\'{c}, T. Davidovi\'{c}, Multiprocessor interconnection networks with small tightness, Int. J. Found. Comput. Sci. 20 (5) (2009) 941–963.

\bibitem{bn} D. Cvetkovi\'{c}, S. Simi\'{c}, Graph spectra in computer Science, Linear Algebra Appl. 434 (6) (2011) 1545-1562.

\bibitem{ba} A. F. Laali, H. Javadi, D. Kiani, Spectra of generalized corona of graphs, Linear Algebra  Appl. 493 (15) (2016) 411-425.

\bibitem{br} R. Frucht, F. Harary, On the corona of two graphs, Aeq. Math. 4 (1970) 322–325.

\bibitem{cvet} Y. P. Hou, W. C. Shiu, The spectrum of the edge corona of two graphs, Electron. J. Linear Algebra. 20 (2010) 586-594.

\bibitem{ao} S. C. Li, S. J. Wang, The $A_{\alpha}$-spectrum of graph product, Electron. J. Linear Algebra. 35 (2019) 473-481.

\bibitem{ad} S. C. Li, W. Wei, The multiplicity of an $A_{\alpha}$-eigenvalue: A unified approach for mixed graphs and complex unit gain graphs, Discrete Math. 343 (8) (2020)
    111916.

\bibitem{ag} H. Q. Lin, J. Xue, J. L. Shu, On the $A_{\alpha}$-spectra of graphs, Linear Algebra Appl. 556 (2018) 210-219.

\bibitem{as} J. P. Liu, X. Z. Wu, J. S. Chen, B. L. Liu, The $A_{\alpha}$-spectral radius characterization of some digraphs, Linear Algebra Appl. 563 (2019) 63-74.

\bibitem{af} S. T. Liu, K. Ch. Das, J. L. Shu, On the eigenvalues of $A_{\alpha}$-matrix of graphs, Discrete Math. 343 (8) (2020) 111917.

\bibitem{DH} X. G. Liu, P. L. Lu, Spectra of subdivision-vertex and subdivision-edge neighbourhood coronae, Linear Algebra Appl. 438 (8) (2013) 3547–3559.

\bibitem{cd} X. G. Liu, S. M. Zhou, Spectra of the neighbourhood corona of two graphs, Linear Multilinear Algebra. 62 (9) (2014) 1205–1219.

\bibitem{dl} Y. Y. Luo, W. G. Yan. Spectra of the generalized edge corona of graphs, Discret. Math. Algorithms Appl. 10 (1) (2017) 411-425.

\bibitem{cw} C. McLeman, E. McNicholas, Spectra of coronae, Linear Algebra Appl. 435 (5) (2011) 998-1007.

\bibitem{ae} V. Nikiforov, Merging the $A$-and $Q$-spectral theories, Appl. Anal. Discrete Math. 11 (2017) 81–107.

\bibitem{dq} A. Sahir, A. Nayeem, $A_{\alpha}$-spectra of graphs obtained by two corona operations and $A_{\alpha}$-cospectral graphs, Discret. Math. Algorithms Appl. 14 (2) (2022) 2150112.

\bibitem{cp} D. Stevanovi\'{c}, Applications of graph spectra in quantum physics, in: Selected Topics in Applications of Graph Spectra, (2011) 85–111.



\bibitem{dc} M. A. Tahir, X. D. Zhang, Coronae graphs and their $\alpha$-eigenvalues, Bull. Malays. Math. Sci. Soc. 43 (2020) 2911–2927.

\bibitem{ct} D. Vukadinovi\'{c}, P. Huang, T. Erlebach, On the spectrum and structure of Internet topology graphs, in: Proceedings of the Second International Workshop on Innovative Internet Computing systems, IICS'02 2346 (2002) 83–95.

\bibitem{cx} S. L. Wang, B. Zhou, The signless laplacian spectra of the corona and edge corona of two graphs, Linear Multilinear Algebra. 61 (2) (2013) 197–204.

\bibitem{ap} F. F. Wang, H. Y. Shan, Z. Y. Wang, On some properties of the
    $A_{\alpha}$-spectral radius of the $k$-uniform hypergraph, Linear Algebra
    Appl. 589 (2020) 62-79.

\end{thebibliography}
\end{document}